\crefname{hypothesis}{Hypothesis}{Hypotheses}
\title{Quotient geometry of tensor ring decomposition\thanks{Submitted to the editors DATE.
\funding{This work was supported by the National Key R\&D Program of China (grant 2023YFA1009300). BG and YY were supported by the National Natural Science Foundation of China (grant No.~12288201).}}}
\author{
    Bin Gao\thanks{State Key Laboratory of Mathematical Sciences, Academy of Mathematics and Systems Science, Chinese  Academy of Sciences, China
		({\{gaobin,yyx\}@lsec.cc.ac.cn}).}
	\and Renfeng Peng\thanks{State Key Laboratory of Mathematical Sciences, Academy of Mathematics and Systems Science, Chinese  Academy of Sciences, and University of Chinese Academy of Sciences, China ({pengrenfeng@lsec.cc.ac.cn}).}
	\and Ya-xiang Yuan\footnotemark[2]
}
\newcommand{\vecb}{\mathbf{b}}
\newcommand{\vecx}{\mathbf{x}}
\newcommand{\vecr}{\mathbf{r}}
\newcommand{\vecmatA}{\vec{\mathbf{A}}}
\newcommand{\mata}{\mathbf{A}}
\newcommand{\matb}{\mathbf{B}}
\newcommand{\matd}{\mathbf{D}}
\newcommand{\matx}{\mathbf{X}}
\newcommand{\matu}{\mathbf{U}}
\newcommand{\matv}{\mathbf{V}}
\newcommand{\matW}{\mathbf{W}}
\newcommand{\matA}{\mathbf{A}}
\newcommand{\matD}{\mathbf{D}}
\newcommand{\matI}{\mathbf{I}}
\newcommand{\matU}{\mathbf{U}}
\newcommand{\tensA}{\mathcal{A}}
\newcommand{\tensE}{\mathcal{E}}
\newcommand{\tensF}{\mathcal{F}}
\newcommand{\tensG}{\mathcal{G}}
\newcommand{\tensM}{\mathcal{M}}
\newcommand{\tensT}{\mathcal{T}}
\newcommand{\tensU}{\mathcal{U}}
\newcommand{\tensV}{\mathcal{V}}
\newcommand{\tensY}{\mathcal{Y}}
\newcommand{\tensX}{\mathcal{X}}
\newcommand{\rank}{\mathrm{rank}}
\newcommand{\ranktt}{\mathrm{rank}_{\mathrm{TT}}}
\newcommand{\ranktr}{\mathrm{rank}_{\mathrm{TR}}}
\newcommand{\rmvec}{\mathrm{vec}}
\newcommand{\tr}{\mathrm{tr}}
\newcommand{\Span}{\mathrm{span}}
\newcommand{\grad}{\mathrm{grad}}
\newcommand{\lift}{\mathrm{lift}}
\newcommand{\subjectto}{\mathrm{s.\,t.}}
\newcommand{\GL}{\mathrm{GL}}
\newcommand{\PGL}{\mathrm{PGL}}
\newcommand{\vectensU}{{\vec{\tensU}}}
\newcommand{\vectensV}{{\vec{\tensV}}}
\newcommand{\barf}{{\bar{f}}}
\newcommand{\barg}{{\bar{g}}}
\newcommand{\bareta}{{\bar{\eta}}}
\newcommand{\bartensM}{{\overline{\tensM}}}
\newcommand{\frob}{\mathrm{F}}
\DeclareMathOperator{\tangent}{T}
\DeclareMathOperator{\vertical}{V}
\DeclareMathOperator{\horizontal}{H}
\DeclareMathOperator{\retr}{R}
\DeclareMathOperator{\proj}{P}
\begin{document}

\maketitle

\begin{abstract}
    Differential geometries derived from tensor decompositions have been extensively studied and provided the foundations for a variety of efficient numerical methods. Despite the practical success of the tensor ring (TR) decomposition, its intrinsic geometry remains less understood, primarily due to the underlying ring structure and the resulting nontrivial gauge invariance. We establish the quotient geometry of TR decomposition by imposing full-rank conditions on all unfolding matrices of the core tensors and capturing the gauge invariance. Additionally, the results can be extended to the uniform TR decomposition, where all core tensors are identical. Numerical experiments validate the developed geometries via tensor ring completion tasks.
\end{abstract}

\begin{keywords}
Tensor ring decomposition, gauge invariance, quotient manifold, Riemannian optimization, tensor completion
\end{keywords}

\begin{MSCcodes}
15A69, 65K05, 90C30
\end{MSCcodes}

\section{Introduction}
Tensors, or multidimensional arrays, are ubiquitous for representing high-dimensional data. However, storing a tensor in full size suffers from the curse of dimensionality, as the required memory grows exponentially with the order of a tensor. Low-rank tensor decompositions not only save storage, but also extract the most essential information from a tensor. Low-rank tensors have demonstrated effectiveness in various applications, including image processing~\cite{vasilescu2003multilinear}, tensor completion~\cite{kressner2014low,gao2025low}, high-dimensional eigenvalue problems~\cite{dolgov2014computation}, and high-dimensional partial differential equations~\cite{bachmayr2023low}; see~\cite{grasedyck2013literature} for an overview. In contrast with low-rank matrices, the low-rank structure of a tensor depends on a specific tensor decomposition format. The canonical polyadic decomposition~\cite{hitchcock1928multiple}, Tucker decomposition~\cite{de2000multilinear}, hierarchical Tucker decomposition~\cite{uschmajew2013geometry}, tensor train (TT) decomposition~\cite{oseledets2011tensor} and tensor ring (TR) decomposition~\cite{zhao2016tensor} (also known as matrix product states (MPS) in quantum physics~\cite{verstraete2004density} with open and periodic boundary conditions respectively) are among the most typical formats. We refer to~\cite{kolda2009tensor} for an overview. 

In this paper, we consider the tensor ring decomposition, which decomposes a $d$-th order tensor $\tensX\in\mathbb{R}^{n_1\times n_2\times\cdots\times n_d}$ into $d$ core tensors $\tensU_k\in\mathbb{R}^{r_k\times n_k\times r_{k+1}}$ for $k=1,2,\dots,d$ with $r_{d+1}=r_1$, denoted by $\tensX=\llbracket\tensU_1,\tensU_2,\dots,\tensU_d\rrbracket$. The $(i_1,i_2,\dots,i_d)$-th entry of $\tensX$ is represented by the trace of products of~$d$ matrices, i.e., 
\begin{equation}
	\label{eq: elementwise TR}
	\tensX=\llbracket\tensU_1,\tensU_2,\dots,\tensU_d\rrbracket\quad\text{and}\quad\tensX(i_1,i_2,\dots,i_d)=\tr\left(\matU_1(i_1)\matU_2(i_2)\cdots\matU_d(i_d)\right),
\end{equation}
where $\matu_k(i_k)=\tensU_k(:,i_k,:)\in\mathbb{R}^{r_{k}\times r_{k+1}}$ is the lateral slice matrix of $\tensU_k$ for $i_k=1,2,\dots,n_k$. \cref{fig: Tensor Ring} (left) depicts the TR decomposition of a tensor via tensor networks. Note that TR boils down to the TT decomposition in~\cref{fig: Tensor Ring} (middle) if $r_1=r_{d+1}=1$, and to the uniform TR decomposition (also known as the translation-invariant MPS in physics~\cite{haegeman2014geometry}) in~\cref{fig: Tensor Ring} (right) if all core tensors are identical, i.e., $\tensU_1=\tensU_2=\cdots=\tensU_d$.

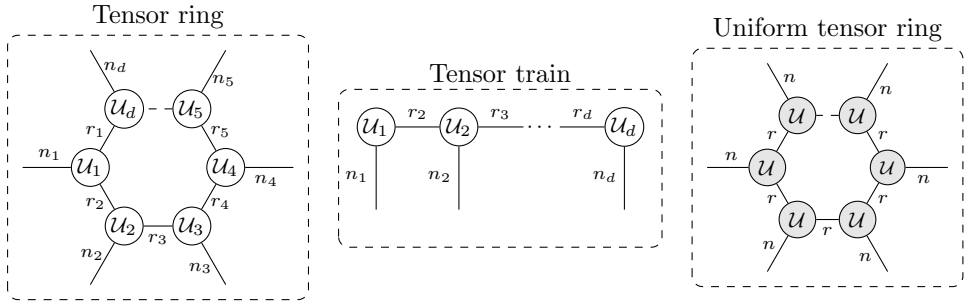
\begin{figure}[htbp]
	\centering
	\begin{tikzpicture}\scriptsize
		\coordinate (O) at (0,0); 
        \def\l{0.9};

        \draw[thin,dashed,rounded corners] ($(O)-(\l+0.2,1.732*\l+0.2)$) rectangle ($(O)-(\l+0.2,1.732*\l+0.2)+(4*\l+0.4,2*1.732*\l+0.4)$);
        \node[above] at ($(O)-(\l+0.2,1.732*\l+0.2)+(2*\l+0.2,2*1.732*\l+0.4)$) {\normalsize Tensor ring};

		\node[circle,draw,inner sep=1pt] (U1) at (O) {\footnotesize$\tensU_1$};
		\node[circle,draw,inner sep=1pt] (U2) at ($(O)+(0.5*\l,-0.866*\l)$) {\footnotesize$\tensU_2$};
		\node[circle,draw,inner sep=1pt] (U3) at ($(O)+(1.5*\l,-0.866*\l)$) {\footnotesize$\tensU_3$};
		\node[circle,draw,inner sep=1pt] (U4) at ($(O)+(2*\l,0)$) {\footnotesize$\tensU_4$};
		\node[circle,draw,inner sep=1pt] (U5) at ($(O)+(1.5*\l,0.866*\l)$) {\footnotesize$\tensU_5$};
		\node[circle,draw,inner sep=1pt] (U6) at ($(O)+(0.5*\l,0.866*\l)$) {\footnotesize$\tensU_{d}$};
        
        \draw[-] (U6)--(U1)--(U2)--(U3)--(U4)-- (U5);
        \draw[dashed] (U5)--(U6);
        \draw (U1) -- ($(U1)+(-\l,0)$);
        \draw (U2) -- ($(U2)+(-0.5*\l,-0.866*\l)$);
        \draw (U3) -- ($(U3)+(0.5*\l,-0.866*\l)$);
        \draw (U4) -- ($(U4)+(\l,0)$);
        \draw (U5) -- ($(U5)+(0.5*\l,0.866*\l)$);
        \draw (U6) -- ($(U6)+(-0.5*\l,0.866*\l)$);
        
        \node at ($0.5*(U1)+0.5*(U2)+0.2*(-0.866*\l,-0.5*\l)$) {$r_2$};
        \node[below] at ($0.5*(U2)+0.5*(U3)$) {$r_3$};
        \node at ($0.5*(U3)+0.5*(U4)+0.2*(0.866*\l,-0.5*\l)$) {$r_4$};
        \node at ($0.5*(U4)+0.5*(U5)+0.2*(0.866*\l,0.5*\l)$) {$r_5$};
        \node at ($0.5*(U6)+0.5*(U1)+0.2*(-0.866*\l,0.5*\l)$) {$r_1$};
        
        \node[above] at ($(U1)+0.6*(-\l,0)$) {$n_1$};
        \node at ($(U2)+0.6*(-0.5*\l,-0.866*\l)+0.2*(-0.866*\l,0.5*\l)$) {$n_2$};
        \node at ($(U3)+0.6*(0.5*\l,-0.866*\l)+0.2*(-0.866*\l,-0.5*\l)$) {$n_3$};
        \node[below] at ($(U4)+0.6*(\l,0)$) {$n_4$};
        \node at ($(U5)+0.6*(0.5*\l,0.866*\l)+0.2*(0.866*\l,-0.5*\l)$) {$n_5$};
        \node at ($(U6)+0.6*(-0.5*\l,0.866*\l)+0.2*(0.866*\l,0.5*\l)$) {$n_d$};

        \coordinate (Ott) at (3.8,1.732*\l-0.3-0.8*\l); 
        \def\l{1.1};
        \scriptsize

        \draw[thin,dashed,rounded corners] ($(Ott)-(0.5,\l+0.5)$) rectangle ($(Ott)-(0.5,0)+(3*\l+1,0.5)$);
        \node[above] at ($(Ott)-(0.5,0)+(1.5*\l+0.5,0.5)$) {\normalsize Tensor train};
        
		\node[circle,draw,inner sep=1pt] (U1) at (Ott) {\footnotesize$\tensU_1$};
		\node[circle,draw,inner sep=1pt] (U2) at ($(Ott)+(\l,0)$) {\footnotesize$\tensU_2$};
		\node[inner sep=1pt] (U3) at ($(Ott)+2*(\l,0)$) {\footnotesize$\cdots$};
		\node[circle,draw,inner sep=1pt] (U4) at ($(Ott)+3*(\l,0)$) {\footnotesize$\tensU_d$};
        
        \draw[-] (U1)--(U2)--(U3)--(U4);
        \node[above] at ($0.5*(U1)+0.5*(U2)$) {$r_2$};
        \node[above] at ($0.5*(U2)+0.5*(U3)$) {$r_3$};
        \node[above] at ($0.5*(U3)+0.5*(U4)$) {$r_{d}$};

        \draw[-] (U1) -- ($(U1)+(0,-\l)$);
        \draw[-] (U2) -- ($(U2)+(0,-\l)$);
        \draw[-] (U4) -- ($(U4)+(0,-\l)$);

        \node[left] at ($(U1)+0.6*(0,-\l)$) {$n_1$};
        \node[left] at ($(U2)+0.6*(0,-\l)$) {$n_2$};
        \node[left] at ($(U4)+0.6*(0,-\l)$) {$n_d$};

        \coordinate (Outr) at (9,0); 
        \scriptsize
        \def\l{0.8};

        \draw[thin,dashed,rounded corners] ($(Outr)-(\l+0.2,1.732*\l+0.2)$) rectangle ($(Outr)-(\l+0.2,1.732*\l+0.2)+(4*\l+0.4,2*1.732*\l+0.4)$);
        \node[above] at ($(Outr)-(\l+0.2,1.732*\l+0.2)+(2*\l+0.2,2*1.732*\l+0.4)$) {\normalsize Uniform tensor ring};
        
		\node[circle,draw,inner sep=2pt,fill=gray!20] (U1) at (Outr) {\footnotesize$\tensU$};
		\node[circle,draw,inner sep=2pt,fill=gray!20] (U2) at ($(Outr)+(0.5*\l,-0.866*\l)$) {\footnotesize$\tensU$};
		\node[circle,draw,inner sep=2pt,fill=gray!20] (U3) at ($(Outr)+(1.5*\l,-0.866*\l)$) {\footnotesize$\tensU$};
		\node[circle,draw,inner sep=2pt,fill=gray!20] (U4) at ($(Outr)+(2*\l,0)$) {$\tensU$};
		\node[circle,draw,inner sep=2pt,fill=gray!20] (U5) at ($(Outr)+(1.5*\l,0.866*\l)$) {\footnotesize$\tensU$};
		\node[circle,draw,inner sep=2pt,fill=gray!20] (U6) at ($(Outr)+(0.5*\l,0.866*\l)$) {\footnotesize$\tensU$};
        
        \draw[-] (U6)--(U1)--(U2)--(U3)--(U4)--(U5);
        \draw[dashed] (U5)--(U6);
        \draw (U1) -- ($(U1)+(-\l,0)$);
        \draw (U2) -- ($(U2)+(-0.5*\l,-0.866*\l)$);
        \draw (U3) -- ($(U3)+(0.5*\l,-0.866*\l)$);
        \draw (U4) -- ($(U4)+(\l,0)$);
        \draw (U5) -- ($(U5)+(0.5*\l,0.866*\l)$);
        \draw (U6) -- ($(U6)+(-0.5*\l,0.866*\l)$);
        
        \node at ($0.5*(U1)+0.5*(U2)+0.2*(-0.866*\l,-0.5*\l)$) {$r$};
        \node[below] at ($0.5*(U2)+0.5*(U3)$) {$r$};
        \node at ($0.5*(U3)+0.5*(U4)+0.2*(0.866*\l,-0.5*\l)$) {$r$};
        \node at ($0.5*(U4)+0.5*(U5)+0.2*(0.866*\l,0.5*\l)$) {$r$};
        \node at ($0.5*(U6)+0.5*(U1)+0.2*(-0.866*\l,0.5*\l)$) {$r$};
        
        \node[above] at ($(U1)+0.6*(-\l,0)$) {$n$};
        \node at ($(U2)+0.6*(-0.5*\l,-0.866*\l)+0.2*(-0.866*\l,0.5*\l)$) {$n$};
        \node at ($(U3)+0.6*(0.5*\l,-0.866*\l)+0.2*(-0.866*\l,-0.5*\l)$) {$n$};
        \node[below] at ($(U4)+0.6*(\l,0)$) {$n$};
        \node at ($(U5)+0.6*(0.5*\l,0.866*\l)+0.2*(0.866*\l,-0.5*\l)$) {$n$};
        \node at ($(U6)+0.6*(-0.5*\l,0.866*\l)+0.2*(0.866*\l,0.5*\l)$) {$n$};

	\end{tikzpicture}
	\caption{Illustration of tensor ring decomposition, tensor train decomposition, and uniform tensor ring decomposition of a tensor via tensor networks.}
	\label{fig: Tensor Ring}
\end{figure}

TR decomposition can be viewed as a generalization of TT decomposition, and has several advantages over TT. On the one hand, TR decomposition permits a more comprehensive exploration of information along mode-$1$ and mode-$d$ by relaxing the rank constraint on the first and last cores in TT from one to an arbitrary integer, i.e., from TT rank $(1,r_1,r_2,\dots,r_{d-1},1)$ to TR rank $(r_1,r_2,\dots,r_d)$, enabling higher compressibility and flexibility~\cite{zhao2019learning}. On the other hand, tensor ring decomposition produces reliable approximations to ground states of local Hamiltonians with modest parameter $\vecr=(r_1,r_2,\dots,r_d)$~\cite{pippan2010efficient}. These properties highlight the expressive power of the TR format. However, the intrinsic geometry derived from TR decomposition---providing insights into the nature of the decomposition and forming the basis for developing efficient numerical methods---remains less understood.

\paragraph{Related work and motivation}
We provide an overview of the existing tensor decompositions and the related differential geometries. The rank of a tensor depends on a specific tensor decomposition format. Since CP rank is complicated~\cite{de2008tensor}, one can alternatively consider the product manifold of rank-one tensors to parametrize tensors in CP format; see, e.g.,~\cite{swijsen2022tensor}. For Tucker decomposition, Koch and Lubich~\cite{koch2010dynamical} derived a representation of tangent spaces of the set of fixed-rank tensors. Kasai and Mishra~\cite{kasai2016low} studied the quotient geometry derived from the parameter space of Tucker decomposition. Uschmajew and Vandereycken~\cite{uschmajew2013geometry} developed the embedded and quotient geometries of tensors in hierarchical Tucker format. Holtz et al.~\cite{holtz2012manifolds} showed that the set of fixed rank tensors in the TT format forms a smooth manifold. Cai et al.~\cite{cai2026tensor} introduced a preconditioned metric on the quotient manifold of tensor train decomposition. Recently, a desingularization approach was proposed in~\cite{gao2024desingularization} for bounded-rank tensors in the Tucker and TT format, which results in a smooth manifold. For tensor ring decomposition, a non-Euclidean metric was developed~\cite{gao2024riemannian} on the parameter space
\[\bartensM_\vecr:=\mathbb{R}^{r_1\times n_1\times r_2}\times \mathbb{R}^{r_2\times n_2\times r_3}\times\cdots\times\mathbb{R}^{r_d\times n_d\times r_1}\] 
to accelerate Riemannian methods. We refer to~\cite{uschmajew2020geometric} for an overview of geometric methods on low-rank tensors.

Unlike the well-explored manifolds of fixed-rank tensors in the (hierarchical) Tucker and TT formats, the existing work on the geometry of tensors in tensor ring format is quite sparse, and its geometry is intricate. First, in contrast with the TT decomposition, the relaxed rank constraint in TR introduces additional degrees of freedom, which leads to the nonuniqueness of the TR rank~\cite[\S 9]{ye2018tensor}. Second, the ring structure renders a different \emph{gauge invariance}. Specifically, an additional matrix $\mata\mata^{-1}$ can be freely inserted between mode~$d$ and mode~$1$ (see~\cref{fig:gauge_inv}), which complicates the definition of tangent spaces, projection operators, and other fundamental geometric tools. Therefore, new strategies are required to address the gauge invariance, and thus, the manifold structure for TR cannot be straightforwardly generalized from TT.

\paragraph{Contributions}
We establish the quotient geometry of tensor ring decomposition. First, we impose full-rank conditions on the core tensors, i.e., the mode-2 unfolding matrix $(\tensU_k)_{(2)}$ satisfies $\rank((\tensU_k)_{(2)})=r_{k}r_{k+1}$ for $k=1,2,\dots,d$, and we yield a new parameter space
\[\bartensM_\vecr^*:=\mathbb{R}^{r_1\times n_1\times r_2}_*\times\mathbb{R}^{r_2\times n_2\times r_3}_*\times\cdots\times\mathbb{R}^{r_{d}\times n_d\times r_{1}}_*.\]
Observe that inserting any matrix $\mata_k^{-1}\mata_k^{}$ between the core tensors $\tensU_{k-1}$ and $\tensU_k$ in~\cref{fig: Tensor Ring} (left) leaves the represented tensor unchanged, which is referred to as the gauge invariance. Therefore, we can introduce an equivalence class on the new parameter space $\bartensM_\vecr^*$ through the gauge invariance: 
\[(\tensU_1,\tensU_2,\dots,\tensU_d)\sim(\tensV_1,\tensV_2,\dots,\tensV_d)\quad\iff\quad\llbracket\tensU_1,\tensU_2,\dots,\tensU_d\rrbracket=\llbracket\tensV_1,\tensV_2,\dots,\tensV_d\rrbracket,\]
or equivalently, there exists $(\mata_1,\mata_2,\dots,\mata_d)\in\GL(r_1)\times\GL(r_2)\times\cdots\GL(r_d)$ such that $\tensU_k=\tensV_k\times_1\mata_k\times_3\mata_{k+1}^{-\top}$ for $k=1,2,\dots,d$; see~\cref{fig:gauge_inv} for an illustration. Note that the ring structure induces an intrinsic global scaling ambiguity, i.e., $\tensV_k\times_1\mata_k\times_3\mata_{k+1}^{-\top}=\tensV_k\times_1(c\mata_k)\times_3(c\mata_{k+1})^{-\top}$ for any $c\in\mathbb{R}_*:=\mathbb{R}\setminus\{0\}$. Therefore, we consider the projective Lie group
\[\PGL(\vecr)=\GL(r_1)\times\GL(r_2)\times\cdots\times\GL(r_d)/\mathbb{R}_*\]
and introduce the quotient set $\tensM_\vecr^*=\bartensM_\vecr^*/\PGL(\vecr)$ via the equivalence $\sim$. We prove that the group action under $\PGL(\vecr)$ is \emph{free} and \emph{proper}, and thus the resulting quotient set $\tensM_\vecr^*$ is a quotient manifold. \cref{tab:TT_TR_quotient} highlights the distinctions between the quotient geometry of the TT and TR decompositions.

\begin{table}[htbp]
    \centering
    \caption{Quotient geometries of TT and TR}\footnotesize
    \renewcommand{\arraystretch}{1.3} 
    \setlength{\tabcolsep}{5pt}       
    \begin{tabular}{lcc}
        \toprule
        {\sc Geometry} & {\sc Tensor train} & {\sc Tensor ring} \\
        \midrule
        \multirow{2}*{Parameter space} & \multicolumn{2}{c}{$\bartensM_\vecr=\mathbb{R}^{r_1\times n_1\times r_2}\times\mathbb{R}^{r_2\times n_2\times r_3}\times\cdots\times\mathbb{R}^{r_{d}\times n_d\times r_{d+1}}$} \\
        \cmidrule(rl){2-2}\cmidrule(rl){3-3}
         & $r_1=r_{d+1}=1$ & $r_1=r_{d+1}$\\
        \midrule
        \multirow{1}*{Total space} & \multicolumn{2}{c}{$\bartensM_\vecr^*=\mathbb{R}^{r_1\times n_1\times r_2}_*\times\mathbb{R}^{r_2\times n_2\times r_3}_*\times\cdots\times\mathbb{R}^{r_{d}\times n_d\times r_{1}}_*$} \\
        \cmidrule(rl){2-2}\cmidrule(rl){3-3}
        $(\tensU_1,\tensU_2,\dots,\tensU_d)$ & $\rank((\tensU_k)_{(3)}) = \rank((\tensU_{k+1})_{(1)}) = r_{k+1}$ & $\rank((\tensU_k)_{(2)})=r_{k}r_{k+1}$\\
        \midrule
        \multirow{1}*{Quotient manifold} & \multicolumn{2}{c}{$\tensM_\vecr^*=\bartensM_\vecr^*/\tensG$} \\
        \cmidrule(rl){2-2}\cmidrule(rl){3-3}
        Lie group $\tensG$ & $\GL(r_2)\times\GL(r_3)\times\cdots\times\GL(r_{d})$ & $(\GL(r_1)\times\cdots\times\GL(r_d))/\mathbb{R}_*$\\
        \cmidrule(rl){2-2}\cmidrule(rl){3-3}
        Dimension & $\sum_{k=1}^d r_{k}n_kr_{k+1}-\sum_{k=2}^{d} r_k^2$ & $\sum_{k=1}^d (r_{k}n_kr_{k+1}-r_k^2)+1$\\
        \midrule
        Reference & \cite[Theorem 14]{haegeman2014geometry} & \cref{thm:TRquotient}\\
        \bottomrule
    \end{tabular}
    \label{tab:TT_TR_quotient}
\end{table}

Moreover, we derive explicit parameterizations of the vertical and horizontal spaces, with the corresponding projection operators. Due to the ring structure of tensor ring decomposition, these constructions are different from those in tensor train. For instance, an element in the horizontal space of the quotient manifold in TT format is characterized by conditions on individual core tensors~\cite[eq.\,(26)]{uschmajew2013geometry}. However, the characterization of an element in the horizontal space of TR format involves coupling between two adjacent core tensors, which complicates the computation of the projections; see~\cref{thm:vertical_and_horizontal}. In addition, we extend all geometric results to the uniform tensor ring decomposition, where the number of parameters is independent from the order $d$. Despite the quotient geometry of uniform TR being known in~\cite[Theorem 20]{haegeman2014geometry}, we develop the parameterizations of the vertical and horizontal spaces.

The developed geometries facilitate optimization methods on the quotient manifold, and we numerically validate the quotient geometries through low-rank tensor ring completion. Numerical experiments report that the uniform TR-based methods require substantially fewer samples for successful recovery than TR-based methods.

\paragraph{Organization}
\Cref{sec: preliminaries} provides basics for tensor computation and Riemannian geometry. We establish the quotient geometry of both TR and uniform TR decompositions in~\cref{sec:quotient}, and propose geometric methods in~\cref{sec:geometric_methods}. The developed geometries are numerically validated in~\cref{sec: numerical}. Finally, we draw the conclusion in~\cref{sec:conclusion}.

\section{Preliminaries}\label{sec: preliminaries}
In this section, we introduce the preliminaries of Riemannian geometry and basics for tensor computation. 

\subsection{Notation for Riemannian geometry}
Assume that a smooth manifold $\bartensM$ is embedded in a Euclidean space $\tensE$. The tangent space of $\bartensM$ at $x\in\bartensM$ is denoted by $\tangent_x\!\bartensM$. Let $\bartensM$ be endowed with a \emph{Riemannian metric} $\barg$, where $\barg_x:\tangent_x\!\bartensM\times\tangent_x\!\bartensM\to\mathbb{R}$ is a symmetric, bilinear, positive-definite function, and smooth with respect to $x\in\bartensM$. The Riemannian metric $g$ introduces a norm $\|\eta\|_x=\sqrt{\barg_x(\eta,\eta)}$ for $\eta\in\tangent_x\!\bartensM$. Given $\bar{\eta}\in\tangent_x\!\tensE\simeq\tensE$, the orthogonal projection operator onto $\tangent_x\!\bartensM$ is $\proj_{\tangent_x\!\bartensM}(\bar{\eta})$. The \emph{tangent bundle} is denoted by $\tangent\!\bartensM=\bigcup_{x\in\bartensM}\tangent_x\!\bartensM$. A smooth mapping $\bar{\retr}:\tangent\!\bartensM\to\bartensM$ is called a retraction~\cite[Definition 1]{absil2012projection} on $\bartensM$ around $x\in\bartensM$ if there exists of a neighborhood $U$ of $(x,0)\in\tangent\!\bartensM$ such that 1) $U\subseteq\mathrm{dom}(\bar{\retr})$ and $\bar{\retr}$ is smooth on $U$; 2) $\bar{\retr}_x(0) = x$ for all $x\in\bartensM$; 3) $\mathrm{D}\bar{\retr}_x(\cdot)[0]=\mathrm{id}_{\tangent_x\!\bartensM}$. The \emph{vector transport} operator is denoted by $\tensT_{y\gets x}:\tangent_x\!\bartensM\to\tangent_y\!\bartensM$.

We introduce the notation for the quotient manifold through \emph{Lie group actions}~\cite[\S 9]{boumal2023intromanifolds}, which is ubiquitous in tensor-related manifolds. The set of $r$-by-$r$ invertible matrices is denoted by $\GL(r)$, which is a special case of Lie group. A group action introduces an equivalence relation, defined as follows. Given a Riemannian manifold $(\bartensM,\barg)$, the orbit of $x\in\bartensM$ through the action $\theta:\tensG\times\bartensM\to\bartensM$ of $\tensG$ is the set $\tensF_x=\left\{y:\ y=\theta_h(x),\ h\in\tensG\right\}$. The group action induces a \emph{natural projection} $\pi(x)=[x]$ and an equivalent class $x\sim y\iff y=\theta_h(x)$ for some $h\in\tensG$ on $\bartensM$. Subsequently, we yield a quotient set $\tensM=\bartensM/\tensG$, which is a quotient manifold if the group action $\theta$ is \emph{free} and \emph{proper}. 

The vertical space at $x\in\bartensM$ is defined by $\vertical_x\!\bartensM:=\tangent_x\!\tensF_x=\mathrm{ker} (\mathrm{D}\pi(x))$, which is a subspace of $\tangent_x\!\bartensM$. The horizontal space is defined by the orthogonal complement of $\vertical_x\!\bartensM$, i.e., $\horizontal_x\!\bartensM:=(\vertical_x\!\bartensM)^\perp=\left\{\xi\in \tangent_x\!\bartensM:\ \barg_x(\xi,\eta)=0,\ \forall \eta\in\vertical_x\!\bartensM\right\}$. Then, it holds that $\tangent_x\!\bartensM=\vertical_x\!\bartensM\oplus\horizontal_x\!\bartensM$. Given a tangent vector $\eta\in\tangent_{[x]}\!\tensM$, we call the unique vector $\bareta\in\horizontal_x\!\bartensM$ satisfying $\mathrm{D}\pi(x)[\bareta]=\eta$ horizontal lift, denoted by $\bareta=\lift_x(\eta)$. For tangent vectors $\xi,\eta\in\tangent_{[x]}\!\tensM$, the Riemannian metric of $\tensM$ is defined by $g_{[x]}(\xi,\eta):=\barg_x(\lift_x(\xi),\lift_x(\eta))$. A retraction on $\tensM$ is given by $\retr_{[x]}(\eta):=[\bar{\retr}_x(\lift_x(\eta))]$, where $\bar{\retr}$ is a retraction on $\bartensM$.

\subsection{Notation for tensor operations}
The following notations are involved in tensor computations~\cite{kolda2009tensor}. The mode-$k$ unfolding of a tensor $\mathcal{X} \in \mathbb{R}^{n_1 \times n_2\times\cdots\times n_d}$ is denoted by $ \matx_{(k)}\in\mathbb{R}^{n_k\times n_{-k}} $, where $n_{-k}:=\prod_{i\neq k}n_i$. The $ (i_1,i_2,\dots,i_d)$-th entry of $\mathcal{X}$ corresponds to the $(i_k,j)$-th entry of $ \matx_{(k)} $, where $ j = \pi_k(i_1,\dots,i_{k-1},i_{k+1},\dots,i_d) :=1 + \sum_{\ell \neq k, \ell = 1}^d(i_\ell-1)\prod_{m = 1, m \neq k}^{\ell-1} n_m$ for $k\in[d]:=\{1,2,\dots,d\}$. The inner product of $\tensX,\tensY\in\mathbb{R}^{n_1\times\cdots\times n_d}$ is defined by $\langle\tensX,\tensY\rangle := \sum_{i_1=1}^{n_1}\cdots\sum_{i_d=1}^{n_d}\tensX({i_1,\dots,i_d})\tensY({i_1,\dots,i_d})$. 
The Frobenius norm of a tensor $\tensX$ is defined by $\|\tensX\|_\mathrm{F}:=\sqrt{\langle\tensX,\tensX\rangle}$.
Given a third-order tensor $\tensU\in\mathbb{R}^{n_1\times n_2\times n_3}$ and two matrices $\mata\in\mathbb{R}^{r_1\times n_1}$ and $\matb\in\mathbb{R}^{r_3\times n_3}$, it holds that 
\begin{equation*}
	\begin{aligned}
		(\tensU\times_1\mata\times_3\matb)_{(1)}&=\mata\matu_{(1)}(\matb\otimes\matI_{n_2})^\top,\\
		(\tensU\times_1\mata\times_3\matb)_{(2)}&=\matu_{(2)}(\matb\otimes\mata)^\top,\\
		(\tensU\times_1\mata\times_3\matb)_{(3)}&=\matb\matu_{(3)}(\matI_{n_2}\otimes\mata)^\top.
	\end{aligned}
\end{equation*}

\subsection{Tensor ring decomposition}
Given a tensor $\tensX\in\mathbb{R}^{n_1\times n_2\times\cdots\times n_d}$, recall that tensor ring decomposition~\cite{zhao2016tensor} decomposes $\tensX$ into $d$ core tensors $\tensU_k\in\mathbb{R}^{r_k\times n_k\times r_{k+1}}$ in~\cref{eq: elementwise TR}, i.e., $\tensX=\llbracket\tensU_1,\tensU_2,\dots,\tensU_d\rrbracket$. The parameter space of tensor ring decomposition is a product space of third-order tensors
\[\bartensM_\vecr=\mathbb{R}^{r_1\times n_1\times r_2}\times \mathbb{R}^{r_2\times n_2\times r_3}\times\cdots\times\mathbb{R}^{r_d\times n_d\times r_1}.\]
The tensor ring rank is a $d$-dimensional array of integers defined by
\begin{equation*}
	\begin{aligned}
		\ranktr(\tensX):=\{\vecr=(r_1,r_2,\dots,r_d):\ &\tensX=\llbracket\tensV_1,\tensV_2,\dots,\tensV_d\rrbracket\ \text{with}\ \tensV_k\in\mathbb{R}^{\underline{r}_k\times n_k\times\underline{r}_{k+1}},\\
		&\text{if }\underline{r}_k\leq r_k, \text{ then } \underline{r}_k=r_k\ \text{for}\ k=1,2,\dots,d\},
	\end{aligned}
\end{equation*}
i.e., an array of the smallest integers such that $\tensX$ admits a TR decomposition with core tensors in $\bartensM_\vecr$. Note that the TR rank of $\tensX$ can be nonunique~\cite[\S 9]{ye2018tensor} due to the ring structure.

For the sake of brevity, we introduce the following matrices for the core tensors $\{\tensU_k\}$: 1) $\matW_k:=(\tensU_k)_{(2)}\in\mathbb{R}^{n_k\times (r_kr_{k+1})}$ for mode-2 unfolding matrices of one core tensor; 2) $\matW_{\neq k}:=(\tensU_{\neq k})_{(2)}$ with $\tensU_{\neq k}\in\mathbb{R}^{r_{k-1}\times\prod_{j\neq k}n_j\times r_{k}}$ defined by the slices matrices $\matU_{\neq k}\left(\pi_k(i_1,\dots,i_{k-1},i_{k+1},\dots,i_d)\right){:=}\left(\prod_{j=k+1}^{d}\matU_{j}(i_{j})\prod_{j=1}^{k-1}\matU_{j}(i_{j})\right)^\top$ for $i_j\in[n_j]:=\{1,2,\dots,n_j\}$ and $j\in[d]$. In the light of the cyclic symmetry of trace operator, we have
\begin{equation}\label{eq: matricization of X}
	\matx_{(k)}=\matW_k^{}\matW_{\neq k}^\top.
\end{equation}

\section{Quotient geometry of tensor ring decomposition}\label{sec:quotient}
In this section, we establish the quotient geometry of tensor ring decomposition by the following steps: 1) inspired by the gauge invariance of the TR decomposition and the fundamental theorem of MPS, we define a parameter space by imposing full-rank conditions; 2) we introduce an equivalence on the new parameter space, and we prove that the resulting quotient set is a quotient manifold; 3) we provide parametrizations of the vertical and horizontal spaces and projections; 4) we extend the results to the uniform TR decomposition. Additionally, we discuss the connections to the tensor train decomposition.

\subsection{Gauge invariance}
Given a TR tensor $\tensX=\llbracket\tensU_1,\dots,\tensU_d\rrbracket$, we observe that the TR decomposition of $\tensX$ can be non-unique since 
\begin{equation*}
    \tr(\matu_1(i_1)\matu_2(i_2)\cdots\matu_d(i_d))=\tr(\mata_1^{}\matu_1(i_1)\mata_2^{-1}\mata_2^{}\matu_2(i_2)\mata_3^{-1}\cdots\mata_d^{}\matu_d(i_d)\mata_1^{-1})
\end{equation*}
holds for all invertible matrices $\mata_k\in\GL(r_k)$ with $k\in[d]$. In other words, inserting an invertible matrix and its inverse between two adjacent core tensors remains the full tensor unchanged, which is referred to as the \emph{gauge invariance}~\cite{chen2020tensor}. \cref{fig:gauge_inv} provides an illustration of the gauge invariance via the tensor networks.

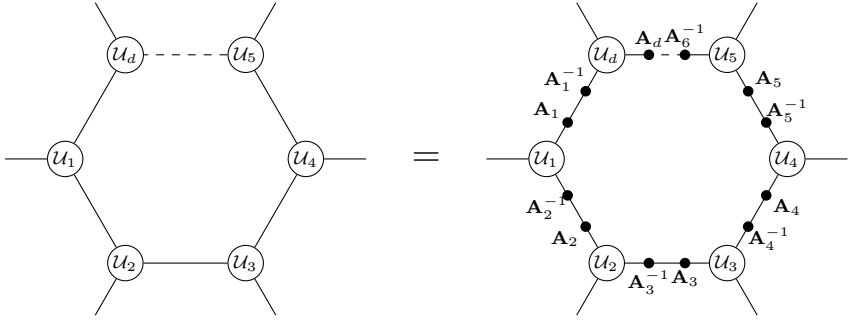
\begin{figure}[htbp]
    \centering
    \begin{tikzpicture}\scriptsize
        \coordinate (O) at (0,0); 
        \def\l{1.6};


		\node[circle,draw,inner sep=1pt] (U1) at (O) {$\tensU_1$};
		\node[circle,draw,inner sep=1pt] (U2) at ($(O)+(0.5*\l,-0.866*\l)$) {$\tensU_2$};
		\node[circle,draw,inner sep=1pt] (U3) at ($(O)+(1.5*\l,-0.866*\l)$) {$\tensU_3$};
		\node[circle,draw,inner sep=1pt] (U4) at ($(O)+(2*\l,0)$) {$\tensU_4$};
		\node[circle,draw,inner sep=1pt] (U5) at ($(O)+(1.5*\l,0.866*\l)$) {$\tensU_5$};
		\node[circle,draw,inner sep=1pt] (U6) at ($(O)+(0.5*\l,0.866*\l)$) {$\tensU_{d}$};
        
        \draw[-] (U6)--(U1)--(U2)--(U3)--(U4)-- (U5);
        \draw[dashed] (U5)--(U6);
        \draw (U1) -- ($(U1)+0.5*(-\l,0)$);
        \draw (U2) -- ($(U2)+0.5*(-0.5*\l,-0.866*\l)$);
        \draw (U3) -- ($(U3)+0.5*(0.5*\l,-0.866*\l)$);
        \draw (U4) -- ($(U4)+0.5*(\l,0)$);
        \draw (U5) -- ($(U5)+0.5*(0.5*\l,0.866*\l)$);
        \draw (U6) -- ($(U6)+0.5*(-0.5*\l,0.866*\l)$);

        \node at ($(3*\l,0)$) {\Large$=$};

        \coordinate (O) at ($(4*\l,0)$);
        \node[circle,draw,inner sep=1pt] (U1) at (O) {$\tensU_1$};
		\node[circle,draw,inner sep=1pt] (U2) at ($(O)+(0.5*\l,-0.866*\l)$) {$\tensU_2$};
		\node[circle,draw,inner sep=1pt] (U3) at ($(O)+(1.5*\l,-0.866*\l)$) {$\tensU_3$};
		\node[circle,draw,inner sep=1pt] (U4) at ($(O)+(2*\l,0)$) {$\tensU_4$};
		\node[circle,draw,inner sep=1pt] (U5) at ($(O)+(1.5*\l,0.866*\l)$) {$\tensU_5$};
		\node[circle,draw,inner sep=1pt] (U6) at ($(O)+(0.5*\l,0.866*\l)$) {$\tensU_{d}$};
        
        \coordinate (A1) at ($0.35*(U6)+0.65*(U1)$);
        \coordinate (A2) at ($0.35*(U1)+0.65*(U2)$);
        \coordinate (A3) at ($0.35*(U2)+0.65*(U3)$);
        \coordinate (A4) at ($0.35*(U3)+0.65*(U4)$);
        \coordinate (A5) at ($0.35*(U4)+0.65*(U5)$);
        \coordinate (A6) at ($0.35*(U5)+0.65*(U6)$);
        
        \coordinate (A1inv) at ($0.35*(U1)+0.65*(U6)$);
        \coordinate (A2inv) at ($0.35*(U2)+0.65*(U1)$);
        \coordinate (A3inv) at ($0.35*(U3)+0.65*(U2)$);
        \coordinate (A4inv) at ($0.35*(U4)+0.65*(U3)$);
        \coordinate (A5inv) at ($0.35*(U5)+0.65*(U4)$);
        \coordinate (A6inv) at ($0.35*(U6)+0.65*(U5)$);
        
        \fill (A1) circle (2pt);
        \fill (A2) circle (2pt);
        \fill (A3) circle (2pt);
        \fill (A4) circle (2pt);
        \fill (A5) circle (2pt);
        \fill (A6) circle (2pt);

        \fill (A1inv) circle (2pt);
        \fill (A2inv) circle (2pt);
        \fill (A3inv) circle (2pt);
        \fill (A4inv) circle (2pt);
        \fill (A5inv) circle (2pt);
        \fill (A6inv) circle (2pt);
        
        
        \draw (U1) -- ($(U1)+0.5*(-\l,0)$);
        \draw (U2) -- ($(U2)+0.5*(-0.5*\l,-0.866*\l)$);
        \draw (U3) -- ($(U3)+0.5*(0.5*\l,-0.866*\l)$);
        \draw (U4) -- ($(U4)+0.5*(\l,0)$);
        \draw (U5) -- ($(U5)+0.5*(0.5*\l,0.866*\l)$);
        \draw (U6) -- ($(U6)+0.5*(-0.5*\l,0.866*\l)$);

        \draw[-] (A6)--(U6)--(U1)--(U2)--(U3)--(U4)-- (U5)--(A6inv);
        \draw[dashed] (A6inv)--(A6);

        \node at ($(A1)+0.2*(-0.866*\l,0.5*\l)$) {$\mata_1$};
        \node at ($(A1inv)+0.2*(-0.866*\l,0.5*\l)$) {$\mata_1^{-1}$};
        
        \node at ($(A2)+0.2*(-0.866*\l,-0.5*\l)$) {$\mata_2$};
        \node at ($(A2inv)+0.2*(-0.866*\l,-0.5*\l)$) {$\mata_2^{-1}$};
        
        \node[below] at ($(A3)$) {$\mata_3$};
        \node[below] at ($(A3inv)$) {$\mata^{-1}_3$};
        
        \node at ($(A4)+0.2*(0.866*\l,-0.5*\l)$) {$\mata_4$};
        \node at ($(A4inv)+0.2*(0.866*\l,-0.5*\l)$) {$\mata_4^{-1}$};
        
        \node at ($(A5)+0.2*(0.866*\l,0.5*\l)$) {$\mata_5$};
        \node at ($(A5inv)+0.2*(0.866*\l,0.5*\l)$) {$\mata_5^{-1}$};
        
        \node[above] at ($(A6)$) {$\mata_{d}$};
        \node[above] at ($(A6inv)$) {$\mata^{-1}_6$};
        
    \end{tikzpicture}
    \caption{Gauge invariance of tensor ring decomposition.}
    \label{fig:gauge_inv}
\end{figure}

An instinctive question is whether the TR decomposition of $\tensX$ is unique up to the gauge invariance. In fact, the uniqueness can be guaranteed by the \emph{injectivity} of $\tensX$, i.e., the core tensors $\tensU_k$ satisfies $\rank((\tensU_k)_{(2)})=r_kr_{k+1}$ and $r_kr_{k+1}\leq n_k$; see~\cref{thm: fundamental}. The result is referred to as the \emph{fundamental theorem of MPS} in~\cite[Theorem 1]{molnar2018normal}. Note that the result no longer holds for the matrix case $d=2$. For instance, consider $\matx=\llbracket\tensU_1,\tensU_2\rrbracket$ and $\tensV_1,\tensV_2$ satisfying $(\tensV_1)_{(2)}=(\tensU_1)_{(2)}\mata$ and $(\tensV_2)_{(2)}=(\tensU_2)_{(2)}\mata^{-\top}$ for an invertible matrix $\mata\in\GL(r_1r_2)$ that can not be represented by Kronecker product of two matrices in $\GL(r_1)$ and $\GL(r_2)$. It holds that $\llbracket\tensU_1,\tensU_2\rrbracket=\llbracket\tensV_1,\tensV_2\rrbracket$, but~\cref{thm: fundamental} does not hold.
\begin{theorem}[fundamental theorem of MPS]\label{thm: fundamental}
	Let $\tensX=\llbracket\tensU_1,\tensU_{2},\dots,\tensU_d\rrbracket$ satisfy $\rank((\tensU_k)_{(2)})=r_kr_{k+1}$ and $r_kr_{k+1}\leq n_k$ for all $k\in[d]$ and $d\geq 3$. If $\tensX$ can also be expressed by $\llbracket\tensV_1,\tensV_2,\dots,\tensV_d\rrbracket$ with $\tensV_k\in\mathbb{R}^{r_k\times n_k\times r_{k+1}}$ and $\rank((\tensV_k)_{(2)})=r_kr_{k+1}$, there exists $\mata_k\in\GL(r_k)$ with $k\in[d]$, such that 
	\[\matv_k(i_k)=\mata_k^{}\matu_k(i_k)\mata_{k+1}^{-1}\text{, or equivalently, }\tensV_k^{}=\tensU_k^{}\times_1^{}\mata_k^{}\times_3\mata_{k+1}^{-\top},\]
    where $\mata_{d+1}:=\mata_1$. Moreover, $\vecmatA:=(\mata_1,\mata_2,\dots,\mata_d)$ is unique up to a constant.
\end{theorem}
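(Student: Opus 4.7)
The plan is to recover the gauge matrices bond by bond, first identifying each $\matG_k \in \GL(r_kr_{k+1})$ as an abstract linear change of basis on the $k$-th bond, and then splitting it into a Kronecker product $\mata_{k+1}^{-1}\otimes\mata_k^\top$.

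\textbf{Step 1 (Full column rank of the environment).} I first show that $\matW_{\neq k}^{\tensU}$ (and the analogous matrix from the $\tensV$-decomposition) has full column rank $r_kr_{k+1}$ for every $k\in[d]$. Since $\rank((\tensU_j)_{(2)})=r_jr_{j+1}$ and $r_jr_{j+1}\le n_j$, the slices $\{\matU_j(i_j)\}_{i_j=1}^{n_j}$ span $\mathbb{R}^{r_j\times r_{j+1}}$. A rank-one product argument (realizing each $\matU_j(i_j)=\mathbf{e}_{\alpha_j}\mathbf{e}_{\beta_j}^\top$ and chaining the inner indices) then shows that the products $\matU_{k+1}(i_{k+1})\cdots\matU_d(i_d)\matU_1(i_1)\cdots\matU_{k-1}(i_{k-1})$ span $\mathbb{R}^{r_{k+1}\times r_k}$, which is exactly full column rank of $\matW_{\neq k}^{\tensU}$. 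Here $d\ge 3$ is used to have at least one interior slice available to glue the rank-one pieces.

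\textbf{Step 2 (Bond-wise gauge via rank matching).} From the matricization identity $\matx_{(k)}=\matW_k^{\tensU}(\matW_{\neq k}^{\tensU})^\top=\matW_k^{\tensV}(\matW_{\neq k}^{\tensV})^\top$ together with Step 1 (so all four factors have full column rank $r_kr_{k+1}$), it follows that $\rank(\matx_{(k)})=r_kr_{k+1}$ and that the column spaces of $\matW_k^{\tensU}$ and $\matW_k^{\tensV}$ coincide. Hence there is a unique $\matG_k\in\GL(r_kr_{k+1})$ with $\matW_k^{\tensV}=\matW_k^{\tensU}\matG_k$ and $\matW_{\neq k}^{\tensV}=\matW_{\neq k}^{\tensU}\matG_k^{-\top}$, which in terms of slices reads $\rmvec(\matV_k(i_k))=\matG_k^\top\rmvec(\matU_k(i_k))$ for all $i_k\in[n_k]$.

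\textbf{Step 3 (Kronecker factorization and bond matching; the main obstacle).} Next I would show $\matG_k=\mata_{k+1}^{-1}\otimes\mata_k^\top$ for some $\mata_k\in\GL(r_k)$ and $\mata_{k+1}\in\GL(r_{k+1})$, with the factors matching consistently across adjacent bonds. The plan is to block $\tensU_k$ and $\tensU_{k+1}$ along their shared bond $r_{k+1}$ into an effective core with slice matrices $\matU_k(i_k)\matU_{k+1}(i_{k+1})\in\mathbb{R}^{r_k\times r_{k+2}}$; these span the full matrix space by Step~1 applied to a two-fold product. Applying Step~2 to the resulting $(d-1)$-site MPS yields a blocked gauge $\matG_{[k,k+1]}\in\GL(r_kr_{k+2})$ relating the $\tensU$-block to the $\tensV$-block. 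Comparing this with the identity $\matV_k(i_k)\matV_{k+1}(i_{k+1})=\tilde F_k(\matU_k(i_k))\,\tilde F_{k+1}(\matU_{k+1}(i_{k+1}))$ obtained by composing the per-site maps encoded by $\matG_k$ and $\matG_{k+1}$, and using that $\matU_k(i_k)$ and $\matU_{k+1}(i_{k+1})$ vary independently over their respective spaces, forces both $\matG_k$ and $\matG_{k+1}$ to split as Kronecker products sharing the same middle factor on the bond $r_{k+1}$. Iterating this around the ring closes the chain and identifies the shared factors with a single tuple $\mata_1,\dots,\mata_d$, yielding $\matV_k(i_k)=\mata_k\matU_k(i_k)\mata_{k+1}^{-1}$ for all $k$ and $i_k$.

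\textbf{Step 4 (Uniqueness up to a global scalar).} Finally, suppose $(\mata_k)_k$ and $(\mata_k')_k$ both realize the gauge. Setting $\matS_k:=\mata_k'\mata_k^{-1}$ gives $\matS_k\matU_k(i_k)=\matU_k(i_k)\matS_{k+1}$ for every $i_k$. Since $\{\matU_k(i_k)\}$ spans $\mathbb{R}^{r_k\times r_{k+1}}$, a Schur-type commutation argument forces $\matS_k=c_k\matI_{r_k}$ for scalars $c_k\in\mathbb{R}_*$, and the same relation gives $c_k=c_{k+1}$; closing the ring leaves a single common scalar $c\in\mathbb{R}_*$.

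The heart of the argument is Step~3. Extracting the tensor-product structure $\mata_{k+1}^{-1}\otimes\mata_k^\top$ from a generic $r_kr_{k+1}\times r_kr_{k+1}$ change of basis, and then closing the gauges cyclically around the ring, is where the ring topology differs essentially from the open TT case: the counterexample at $d=2$ recalled just before the theorem exhibits a $\matG_1$ that admits no such Kronecker decomposition, so the hypothesis $d\ge 3$ is indispensable for the blocking-and-matching argument to succeed.
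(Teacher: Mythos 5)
The paper does not prove this theorem itself --- it is cited as the ``fundamental theorem of MPS'' from Molnár et al.\ --- so I evaluate your proposal on its own merits. Your Steps~1, 2, and 4 are essentially correct (Step~1 corresponds to the paper's Lemma~2.2, and Step~2 is the standard uniqueness of full-column-rank factorizations). The problem is Step~3, which you yourself flag as ``the heart of the argument'' and ``the main obstacle'': you do not prove it, you only describe a plan for it. The claim that the functional equation $\phi_k(\matM)\phi_{k+1}(\matN)=\psi_k(\matM\matN)$ (with $\phi_k,\phi_{k+1},\psi_k$ the per-site and per-block linear maps encoded by $\matG_k^\top$, $\matG_{k+1}^\top$, $\matG_{[k,k+1]}^\top$) ``forces'' $\matG_k$ to be a Kronecker product with the right matching factor is exactly the nontrivial content of the theorem, and nothing in the proposal establishes it.

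To actually close Step~3 one needs several ingredients that are absent. (i) From the bilinear identity one must deduce that $\phi_k$ sends rank-one matrices to rank-one matrices: fix a rank-one $\matM=\vecu\vecv^\top$, note that $\{\matM\matN:\matN\}$ is an $r_{k+2}$-dimensional space of rank-$\le 1$ matrices while $\{\phi_k(\matM)\phi_{k+1}(\matN):\matN\}$ has dimension $\rank(\phi_k(\matM))\cdot r_{k+2}$ since $\phi_{k+1}$ is onto, and $\psi_k$ cannot increase dimension. (ii) Invoke the classical rank-one preserver theorem (Marcus--Moyls/Frobenius) to get $\phi_k(\matM)=\matP_k\matM\matQ_k$ or, only when $r_k=r_{k+1}$, $\phi_k(\matM)=\matP_k\matM^\top\matQ_k$. (iii) Rule out the transpose case by using the functional equation on pairs with $\matM\matN=0$ (which would force $\phi_{k+1}$ to annihilate a nonzero subspace, contradicting injectivity). (iv) Show that the middle factors match up to scalars, i.e.\ $\matQ_k\matP_{k+1}=\mu_k\matI_{r_{k+1}}$, again from the functional equation and rank-one matrices. (v) Close the ring: this introduces scalars $\mu_1,\dots,\mu_d$, and one must show $\prod_k\mu_k=1$ (which follows by taking traces of $\prod_k\matV_k(i_k)=\prod_k\phi_k(\matU_k(i_k))$ and using $\tensX\neq 0$) before the $\alpha_k$-rescaling that produces a single consistent tuple $(\mata_1,\dots,\mata_d)$ is solvable. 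None of (i)--(v) appears in the proposal; they are the theorem. As written, Step~3 asserts its conclusion rather than proving it, so the proposal as a whole has a genuine gap.
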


In the light of~\cref{thm: fundamental}, we impose the full-rank condition on $\bartensM_\vecr$ and define a new parameter space 
\[\bartensM_\vecr^*=\mathbb{R}_*^{r_1\times n_1\times r_2}\times \mathbb{R}_*^{r_2\times n_2\times r_3}\times\cdots\mathbb{R}_*^{r_d\times n_d\times r_1},\]
where $\mathbb{R}_*^{r_k\times n_k\times r_{k+1}}=\{\tensU_k\in\mathbb{R}^{r_k\times n_k\times r_{k+1}}:\rank((\tensU_k)_{(2)})=r_kr_{k+1}\}$ and $r_kr_{k+1}\leq n_k$. 
\begin{definition}[injective TR tensor]
    A tensor $\tensX=\llbracket\tensU_1,\tensU_2,\dots,\tensU_d\rrbracket$ with $\vectensU=(\tensU_1,\tensU_2,\dots,\tensU_d)\in\bartensM_\vecr^*$ is referred to as an injective TR tensor.
\end{definition}

Note that $\rank((\tensU_k)_{(2)})=r_kr_{k+1}$ naturally implies that $\rank((\tensU_k)_{(1)})=r_k$ and $\rank((\tensU_k)_{(3)})=r_{k+1}$. Additionally, we observe that if $\tensX$ is injective, $\vecr=(r_1,r_2,\dots,r_d)$ is a TR rank of $\tensX$ by using the notations in~\cref{eq: matricization of X}.
\begin{lemma}\label{lem: rank}
	Given an injective TR tensor $\tensX=\llbracket\tensU_1,\dots,\tensU_d\rrbracket$ with $\vectensU\in\bartensM_\vecr^*$, it holds that $\rank(\matx_{(k)})=\rank(\matW_k)=\rank(\matW_{\neq k})=r_kr_{k+1}$ for $k\in[d]$.
\end{lemma}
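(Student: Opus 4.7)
The plan is to reduce all three equalities to a single span statement and then prove that span by an elementary construction. Since $\matW_k = (\tensU_k)_{(2)} \in \mathbb{R}^{n_k \times r_k r_{k+1}}$ with $r_k r_{k+1} \leq n_k$, the defining condition of $\bartensM_\vecr^*$ directly gives $\rank(\matW_k) = r_k r_{k+1}$, so $\matW_k$ has full column rank. Combined with the identity $\matx_{(k)} = \matW_k \matW_{\neq k}^\top$ from~\cref{eq: matricization of X}, left-multiplication by $(\matW_k^\top \matW_k)^{-1} \matW_k^\top$ yields $\matW_{\neq k}^\top = (\matW_k^\top \matW_k)^{-1} \matW_k^\top \matx_{(k)}$, and hence $\rank(\matx_{(k)}) = \rank(\matW_{\neq k})$. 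It therefore suffices to establish $\rank(\matW_{\neq k}) = r_k r_{k+1}$.

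Next, I reinterpret $\rank(\matW_{\neq k})$ as the dimension of a matrix span. By the definition of $\tensU_{\neq k}$, the lateral slice indexed by $(i_1, \ldots, i_{k-1}, i_{k+1}, \ldots, i_d)$ is the transpose of
$$ \matP(i_{k+1}, \ldots, i_{k-1}) := \matU_{k+1}(i_{k+1}) \cdots \matU_d(i_d) \matU_1(i_1) \cdots \matU_{k-1}(i_{k-1}) \in \mathbb{R}^{r_{k+1} \times r_k}. $$
Since mode-$2$ unfolding simply vectorizes each lateral slice and transposition is a linear bijection, $\rank(\matW_{\neq k})$ equals the dimension of $\Span\{\matP(i_{k+1}, \ldots, i_{k-1})\}$. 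The task therefore reduces to proving that this span equals all of $\mathbb{R}^{r_{k+1} \times r_k}$.

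The key lemma I will prove is: if $\Span S_j = \mathbb{R}^{r_j \times r_{j+1}}$ for every $j$, then $\Span\{\matA_{k+1} \matA_{k+2} \cdots \matA_{k-1} : \matA_j \in S_j\} = \mathbb{R}^{r_{k+1} \times r_k}$. By multilinearity, this span coincides with the set of products where each $\matA_j$ ranges freely over $\mathbb{R}^{r_j \times r_{j+1}}$. To see that every standard basis matrix $\vece_\alpha \vece_\beta^\top \in \mathbb{R}^{r_{k+1} \times r_k}$ lies in the span, take $\matA_{k+1} = \vece_\alpha \vece_1^\top$, $\matA_{k-1} = \vece_1 \vece_\beta^\top$, and $\matA_j = \vece_1 \vece_1^\top$ for every intermediate $j$; the product then collapses telescopically to $\vece_\alpha \vece_\beta^\top$. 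Applying this lemma with $S_j = \{\matU_j(i_j) : i_j \in [n_j]\}$, which spans $\mathbb{R}^{r_j \times r_{j+1}}$ precisely because of the full-rank assumption on $\matW_j$, concludes the span claim and hence the proof.

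The only (minor) obstacle is notational bookkeeping: carefully tracking the cyclic index ordering and the built-in transpose in the definition of $\tensU_{\neq k}$. The substantive mathematical content reduces to the elementary telescoping construction above, and no additional geometric or ring-theoretic machinery is needed.
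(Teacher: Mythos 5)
Your proposal is correct, and it shares the paper's bookkeeping frame: reduce to the full-rank statement $\rank(\matW_{\neq k}) = r_kr_{k+1}$ using the full column rank of $\matW_k$ and the identity $\matx_{(k)} = \matW_k^{}\matW_{\neq k}^\top$, then recast $\rank(\matW_{\neq k})$ as the dimension of the span of the cyclic matrix products. Where you genuinely diverge is in the proof of the span statement. The paper argues \emph{dually}: it fixes $\matv$ orthogonal to every product $\prod_{j\neq k}\matU_j(i_j)$ and peels off one factor at a time (using $\Span\{\matU_j(i_j)\}_{i_j} = \mathbb{R}^{r_j\times r_{j+1}}$, which follows from $\rank(\matW_j) = r_jr_{j+1}$) until it concludes $\matv = 0$. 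You argue \emph{primally and constructively}: after using multilinearity to replace the sets $S_j = \{\matU_j(i_j)\}$ by their full spans $\mathbb{R}^{r_j\times r_{j+1}}$, you explicitly realize each standard basis matrix $\vece_\alpha\vece_\beta^\top$ of $\mathbb{R}^{r_{k+1}\times r_k}$ via the telescoping choice $\matA_{k+1} = \vece_\alpha\vece_1^\top$, $\matA_j = \vece_1\vece_1^\top$ for intermediate $j$, and $\matA_{k-1} = \vece_1\vece_\beta^\top$. The two routes rest on the same ingredient (full-rank slices span the matrix space) but differ in flavor: the paper's induction trades one inner-product computation per peeling step, whereas your construction writes down a basis outright and is arguably more transparent. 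One cosmetic remark: the phrase ``this span coincides with the set of products'' should read ``with the span of the products''; and for $d = 2$ the cyclic product has only one factor so the telescoping construction degenerates (then $k{+}1$ and $k{-}1$ coincide modulo $d$), but in that case the span statement is just the full-rank hypothesis on $\matW_{k+1}$ and so is immediate.
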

\begin{proof}
	It suffices to prove $\rank(\matW_{\neq k})=r_kr_{k+1}$ for $k\in[d]$. Specifically, we prove the result for $k=1$ and others in a same fashion. Recall the rows of $\matW_{\neq k}$. We aim to prove $\mathbb{R}^{r_2\times r_1}=\Span\{\prod_{j=2}^{d}\matu_j(i_j)\}_{i_2,\dots,i_d=1}^{n_2,\dots,n_d}$, i.e., any matrix in $\mathbb{R}^{r_2\times r_1}$ can be represented by the linear combinations of matrices $\prod_{j=2}^{d}\matu_j(i_j)$. Or equivalently, $\matv\in\Span^\perp\{\prod_{j=2}^{d}\matu_j(i_j)\}_{i_2,\dots,i_d=1}^{n_2,\dots,n_d}$ if and only if $\matv=0$ by induction.
	
	We observe that
	\[0=\langle\matv,\prod_{j=2}^{d}\matu_j(i_j)\rangle=\langle\matv(\prod_{j=3}^{d}\matu_j(i_j))^\top,\matu_2(i_2)\rangle\]
	holds for all $i_2\in[n_2],\dots,i_d\in[n_d]$. Since $\rank(\matW_2)=r_2r_3$, we have $\mathbb{R}^{r_2\times r_3}=\Span\{\matu_2(1),\dots,\matu_2(n_2)\}$. Therefore, it holds that $\matv(\prod_{j=3}^{d}\matu_j(i_j))^\top=0$ for all $i_3,i_4,\dots,i_d$. 
	
	Suppose $\matv(\prod_{j=l}^{d}\matu_j(i_j))^\top=0$ for some integer $l\geq 2$ and $l<d$, we aim to prove that $\matv(\prod_{j=l+1}^{d}\matu_j(i_j))^\top=0$.
	Since $\mathbb{R}^{r_l\times r_{l+1}}=\Span\{\matu_l(1),\dots,\matu_l(n_l)\}$, we have $\matv(\prod_{j=l+1}^{d}\matu_j(i_j))^\top\tilde{\matu}_l^\top=0$ holds for all $\tilde{\matu}_l\in\mathbb{R}^{r_l\times r_{l+1}}$. Therefore, it holds that $\matv(\prod_{j=l+1}^{d}\matu_j(i_j))^\top=0$ and we obtain $\matv=0$ by induction. Consequently, we have 
	\[r_kr_{k+1}=\rank(\matW_{\neq k})=\rank(\matW_k^\dagger\matx_{(k)}^{})\leq\rank(\matx_{(k)})\leq\rank(\matW_k)=r_kr_{k+1}\]
	and thus $\rank(\matx_{(k)})=r_kr_{k+1}$, where $\matW_k^\dagger=(\matW_k^\top\matW_k^{})^{-1}\matW_k^{\top}$ is the Moore--Penrose pseudoinverse of $\matW_k$. 
\end{proof}

By using Lemma~\ref{lem: rank}, we can prove that $\vecr\in\ranktr(\tensX)$. 
\begin{proposition}
	Given a $d$-th order injective TR tensor $\tensX=\llbracket\tensU_1,\tensU_2,\dots,\tensU_d\rrbracket$ with $\vectensU\in\bartensM_\vecr^*$, it holds that $\vecr\in\ranktr(\tensX)$. 
\end{proposition}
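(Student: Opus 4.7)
The plan is to verify the two defining conditions of $\ranktr(\tensX)$: first, that $\tensX$ admits a TR decomposition with cores of sizes prescribed by $\vecr$, which is immediate by hypothesis; second, the minimality condition that any alternative TR decomposition with core sizes $\underline{r}_k\leq r_k$ (for all $k$) must in fact have $\underline{r}_k=r_k$. The key tool is \cref{lem: rank}, which pins down the rank of every mode-$k$ unfolding of an injective TR tensor.

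I would proceed as follows. First, by \cref{lem: rank} applied to the assumed injective decomposition we have $\rank(\matx_{(k)})=r_kr_{k+1}$ for every $k\in[d]$. Second, suppose $\tensX=\llbracket\tensV_1,\ldots,\tensV_d\rrbracket$ is any alternative TR decomposition with $\tensV_k\in\mathbb{R}^{\underline{r}_k\times n_k\times\underline{r}_{k+1}}$ satisfying $\underline{r}_k\leq r_k$ for all $k$. Using the identity in~\cref{eq: matricization of X} applied to this alternative decomposition, $\matx_{(k)}=\widetilde{\matW}_k\widetilde{\matW}_{\neq k}^\top$ where $\widetilde{\matW}_k=(\tensV_k)_{(2)}\in\mathbb{R}^{n_k\times\underline{r}_k\underline{r}_{k+1}}$, so $\rank(\matx_{(k)})\leq\underline{r}_k\underline{r}_{k+1}$. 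Combining with step one yields $r_kr_{k+1}\leq\underline{r}_k\underline{r}_{k+1}$ for every $k$, while the hypothesis $\underline{r}_k\leq r_k$ gives the reverse inequality $\underline{r}_k\underline{r}_{k+1}\leq r_kr_{k+1}$. Hence $\underline{r}_k\underline{r}_{k+1}=r_kr_{k+1}$ for all $k\in[d]$.

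Finally, I would deduce $\underline{r}_k=r_k$ for every $k$ from these product equalities together with $\underline{r}_k\leq r_k$. Indeed, if $\underline{r}_{k_0}<r_{k_0}$ for some $k_0$, then $\underline{r}_{k_0-1}\underline{r}_{k_0}=r_{k_0-1}r_{k_0}$ forces $\underline{r}_{k_0-1}>r_{k_0-1}$, contradicting $\underline{r}_{k_0-1}\leq r_{k_0-1}$ (indices being taken cyclically modulo $d$). Therefore no such $k_0$ exists, and $\underline{r}_k=r_k$ for all $k$, which establishes $\vecr\in\ranktr(\tensX)$.

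The proof is short and largely routine once \cref{lem: rank} is in hand; the only delicate point is the last elementary combinatorial argument, which relies crucially on the cyclic index structure of the TR format (in contrast to the TT case, where the boundary ranks $r_1=r_{d+1}=1$ break the cycle). The main conceptual obstacle is really absorbed into \cref{lem: rank}, where the lower bound $\rank(\matx_{(k)})\geq r_kr_{k+1}$ was established by the induction on the span of slice products; beyond that, the current proposition just harvests the consequence.
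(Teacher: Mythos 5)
Your proof is correct and follows essentially the same route as the paper: invoke \cref{lem: rank} to fix $\rank(\matx_{(k)})=r_kr_{k+1}$, bound that rank above by $\underline{r}_k\underline{r}_{k+1}$ for any competing decomposition via~\cref{eq: matricization of X}, squeeze to get $\underline{r}_k\underline{r}_{k+1}=r_kr_{k+1}$, and conclude. The only difference is that you spell out the final elementary deduction ($\underline{r}_k\underline{r}_{k+1}=r_kr_{k+1}$ together with $\underline{r}_k\leq r_k$ for all $k$ forces $\underline{r}_k=r_k$) where the paper simply asserts it; that added detail is correct and harmless.
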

\begin{proof}
	If $\tensX$ can be decomposed by $\tensX=\llbracket\tensV_1,\tensV_2,\dots,\tensV_d\rrbracket$ with $\tensV_k\in\mathbb{R}^{\underline{r}_k\times n_k\times\underline{r}_{k+1}}$ and $\underline{\vecr}\leq\vecr$, it follows from Lemma~\ref{lem: rank} that $r_kr_{k+1}=\rank(\matx_{(k)})\leq\underline{r}_{k}\underline{r}_{k+1}\leq r_kr_{k+1}$ for $k\in[d]$. Therefore, $r_k=\underline{r}_{k}$. 
\end{proof}

\subsection{Quotient geometry}
Recall that the manifold structure of several special cases of TR decomposition was studied. For example, the set of fixed-rank tensors in tensor train format, i.e., $r_1=r_{d+1}=1$, forms a submanifold of $\mathbb{R}^{n_1 \times\cdots\times n_d}$; see~\cite{holtz2012manifolds}. Haegeman et al.~\cite{haegeman2014geometry} studied the manifold structure of the uniform TR decomposition. However, due to the underlying ring structure, the existing result cannot be straightforwardly applied to TR decomposition. Therefore, we aim to develop the manifold structure for TR decomposition by developing quotient manifold structure on the total space $\bartensM_\vecr^*$.

First, since the TR decomposition is non-unique, an appropriate equivalence class is necessary to eliminate such uncertainty. In the light of the gauge invariance and~\cref{thm: fundamental}, we consider the set $\bartensM_\vecr^*$ to be the total space, and the set
\[\PGL(\vecr):=\GL(\vecr)/\mathbb{R}_*\quad \text{with} \quad \GL(\vecr)=\GL(r_1)\times\GL(r_2)\times\cdots\times\GL(r_d)\]
to introduce an equivalent class on $\bartensM_\vecr^*$, where $\mathbb{R}_*=\mathbb{R}\setminus\{0\}$. A representative in $\PGL(\vecr)$ is defined by $[\vecmatA]:=\{c\vecmatA:c\in\mathbb{R}_*,\vecmatA\in\GL(\vecr)\}$. $\PGL(\vecr)$ is a quotient manifold and a Lie group. 

Subsequently, we consider the group action on $\bartensM_\vecr^*$:
\[\theta_{[\vecmatA]}(\vectensU):=\theta_{\vecmatA}(\vectensU)=(\theta^1_{\vecmatA}(\tensU_k),\theta^2_{\vecmatA}(\tensU_2),\dots,\theta^d_{\vecmatA}(\tensU_d^*))\ \text{for} \ \vectensU\in\bartensM_\vecr^*,\]
where  $\theta^k_{\vecmatA}(\tensU_k)=\tensU_k\times_1\matA_k\times_3\matA_{k+1}^{-\top}$ for $k\in[d]$ and $\mata_{d+1}=\mata_1$. The group action introduces an equivalent class: $\vectensU\sim\vectensV\iff\vectensV=\theta_{[\vecmatA]}(\vectensU)$ for some $[\vecmatA]\in\PGL(\vecr)$. Note that the group action is well-defined since it does not depend on a specific representative, i.e., $\theta^k_{\vecmatA}(\tensU_k)=\theta^k_{c\vecmatA}(\tensU_k)$ for all $c\in\mathbb{R}_*$. 

\begin{remark}[Necessity of the projective group]
    It is worth noting that the necessity of the projective group $\PGL(\vecr)$ is specific to the tensor ring decomposition. For TT decomposition, the group action induced from $\GL(\vecr)$ is free and proper. However, the ring structure of TR introduces a redundancy that relates to all cores simultaneously. Specifically, it holds that $\theta_{\vecmatA}(\vectensU)=\theta_{c\vecmatA}(\vectensU)$ for all $c\neq 0$. The redundancy renders the group action from $\GL(\vecr)$ a non-free group action for TR. Therefore, it is necessary to consider the projective group $\PGL(\vecr)$ to ensure that the resulting group action $\theta$ is free and proper.
\end{remark}

Then, we aim to prove that $\tensM_\vecr^*=\bartensM_\vecr^*/\PGL(\vecr)$ is a smooth quotient manifold. It suffices to verify that $\theta$ is free and proper~\cite[\S 9.2]{boumal2023intromanifolds}. 
\begin{proposition}\label{prop: free}
	$\theta$ is a free group action, i.e., for all $\vectensU\in\bartensM_\vecr^*$, $\theta_{[\vecmatA]}(\vectensU)=\vectensU\Rightarrow\vecmatA=(c\matI_{r_1},\dots,c\matI_{r_d})$ for some $c\in\mathbb{R}_*$.
\end{proposition}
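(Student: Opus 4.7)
The plan is to unfold the fixed-point equation $\theta_{\vecmatA}(\vectensU) = \vectensU$ slab by slab and reduce it to an elementary Kronecker-product identity, exploiting the full-rank hypothesis that comes with $\vectensU \in \bartensM_\vecr^*$.

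First, I would fix a representative $\vecmatA \in \GL(\vecr)$ satisfying $\tensU_k \times_1 \matA_k \times_3 \matA_{k+1}^{-\top} = \tensU_k$ for every $k \in [d]$, with the cyclic convention $\matA_{d+1} = \matA_1$. Taking the mode-$2$ unfolding and using the identity $(\tensU_k \times_1 \matA_k \times_3 \matA_{k+1}^{-\top})_{(2)} = \matW_k(\matA_{k+1}^{-\top} \otimes \matA_k)^\top = \matW_k(\matA_{k+1}^{-1} \otimes \matA_k^\top)$ recorded in the preliminaries turns the fixed-point condition into
\begin{equation*}
\matW_k\bigl(\matA_{k+1}^{-1} \otimes \matA_k^\top - \matI_{r_k r_{k+1}}\bigr) = 0, \qquad k \in [d].
\end{equation*}
Because $\vectensU \in \bartensM_\vecr^*$, each $\matW_k \in \mathbb{R}^{n_k \times r_k r_{k+1}}$ has full column rank $r_k r_{k+1}$, so $\matW_k^\top \matW_k$ is invertible. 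Left-multiplying by $(\matW_k^\top \matW_k)^{-1} \matW_k^\top$ kills $\matW_k$ and forces
\begin{equation*}
\matA_{k+1}^{-1} \otimes \matA_k^\top = \matI_{r_k r_{k+1}} \qquad \text{for every } k \in [d].
\end{equation*}

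Next, I would invoke the standard fact that if $\matB \otimes \matC = \matI_{pq}$ with $\matB \in \mathbb{R}^{p \times p}$ and $\matC \in \mathbb{R}^{q \times q}$, then $\matB = c\,\matI_p$ and $\matC = c^{-1} \matI_q$ for some $c \in \mathbb{R}_*$. This is immediate from the block structure: the $(i,j)$-block of $\matB \otimes \matC$ is $b_{ij} \matC$, so off-diagonal vanishing forces $b_{ij} = 0$ for $i \ne j$, while the diagonal condition $b_{ii} \matC = \matI_q$ forces $\matC$ to be the common scalar multiple of $\matI_q$ and all $b_{ii}$ to be its reciprocal. Applying this separately to each $k$ produces a scalar $c_k \in \mathbb{R}_*$ with $\matA_{k+1}^{-1} = c_k \matI_{r_{k+1}}$ and $\matA_k^\top = c_k^{-1} \matI_{r_k}$, i.e.
\begin{equation*}
\matA_k = c_k^{-1} \matI_{r_k}, \qquad \matA_{k+1} = c_k^{-1} \matI_{r_{k+1}}.
\end{equation*}

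Finally, I would chain these identities around the ring. Comparing the expression for $\matA_{k+1}$ obtained from index $k$ with the one obtained from index $k+1$ gives $c_k^{-1} = c_{k+1}^{-1}$, so $c_k$ is independent of $k$; call this common value $c^{-1}$. The cyclic closure $\matA_{d+1} = \matA_1$ is automatic because index $k = d$ yields $\matA_1 = c^{-1} \matI_{r_1}$, matching index $k = 1$. Hence $\vecmatA = (c^{-1}\matI_{r_1}, \dots, c^{-1}\matI_{r_d})$, which is the claimed form (with $c$ renamed to its reciprocal). The only genuinely delicate step is the Kronecker-identity lemma; everything else is bookkeeping.
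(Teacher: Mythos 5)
Your proof is correct and follows essentially the same route as the paper's: unfold mode-$2$ to get $\matW_k(\matA_{k+1}^{-1}\otimes\matA_k^\top)=\matW_k$, use the full column rank of $\matW_k$ to cancel it, apply the Kronecker-identity observation to conclude $\matA_k$ and $\matA_{k+1}$ are scalar multiples of the identity with reciprocal scalars, and then chain the scalars around the ring. The only cosmetic difference is that you spell out the Kronecker lemma explicitly, whereas the paper states the conclusion directly.
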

\begin{proof}
	By~\cref{thm: fundamental}, there exists $\vecmatA\in\GL(\vecr)$, such that $\tensU_k\times_1\mata_k\times_3\mata_{k+1}^{-\top}=\tensU_k$ for all $k\in[d]$, i.e., $(\tensU_k)_{(2)}(\mata_{k+1}^{-1}\otimes\mata_{k}^{\top})=(\tensU_k)_{(2)}$. Since $(\tensU_k)_{(2)}$ is of full rank, we obtain $\mata_{k+1}^{-1}\otimes\mata_{k}^{\top}=\matI_{r_kr_{k+1}}$. Therefore, there exists $c_k\in\mathbb{R}_*$, such that $\mata_{k}=c_k\matI_{r_k}$ and $\mata_{k+1}=c_k\matI_{r_{k+1}}$. It follows from the arbitrariness of $k$ that $c_1=c_2=\cdots=c_d$. 
\end{proof}

\begin{proposition}\label{prop: proper}
    $\theta$ is a proper group action. Or equivalently, given a sequence $\{\vectensU^{(t)}\}_{t=1}^{\infty}$ in $\bartensM_\vecr^*$ converging to $\vectensU^*$, and a sequence $\{[\vecmatA^{(t)}]\}_{t=1}^{\infty}$ in the projected group $\PGL(\vecr)$. If the sequence $\vectensV^{(t)}:=\{\theta_{[\vecmatA^{(t)}]}(\vectensU^{(t)})\}_{t=1}^{\infty}$, generated from the group action $\theta:([\vecmatA^{(t)}],\vectensU^{(t)})\mapsto\theta_{[\vecmatA^{(t)}]}(\vectensU^{(t)})$, converges to $\vectensV^*\in\bartensM_\vecr^*$, then the sequence $\{[\vecmatA^{(t)}]\}_{t=1}^{\infty}$ converges to a point $[\vecmatA^*]\in\PGL(\vecr)$.
\end{proposition}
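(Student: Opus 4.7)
The plan is to use \cref{thm: fundamental} to identify a natural candidate $[\vecmatA^*]\in\PGL(\vecr)$ for the limit and then to exhibit representatives $\vecmatA^{(t)}\in[\vecmatA^{(t)}]$, under a suitable normalization, that converge to some $\vecmatA^*\in[\vecmatA^*]$ in $\GL(\vecr)$; by openness of the quotient projection $\pi\colon\GL(\vecr)\to\PGL(\vecr)$ this yields $[\vecmatA^{(t)}]\to[\vecmatA^*]$ in the quotient topology. First, continuity of the TR contraction passes $\llbracket\vectensV^{(t)}\rrbracket=\llbracket\vectensU^{(t)}\rrbracket$ to $\llbracket\vectensV^*\rrbracket=\llbracket\vectensU^*\rrbracket$, and since both $\vectensU^*,\vectensV^*$ lie in $\bartensM_\vecr^*$, \cref{thm: fundamental} furnishes $\vecmatA^*\in\GL(\vecr)$---unique up to a global nonzero scalar---with $\vectensV^*=\theta_{\vecmatA^*}(\vectensU^*)$. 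Rewriting the action relation via mode-$2$ unfoldings gives
\[(\tensV_k^{(t)})_{(2)}=(\tensU_k^{(t)})_{(2)}\bigl((\mata_{k+1}^{(t)})^{-1}\otimes(\mata_k^{(t)})^\top\bigr),\]
and left-multiplying by $((\tensU_k^{(t)})_{(2)})^\dagger$, whose continuity on $\bartensM_\vecr^*$ near $\vectensU^*$ follows from the locally constant full column rank guaranteed by \cref{lem: rank}, shows that the Kronecker factors $L_k^{(t)}:=(\mata_{k+1}^{(t)})^{-1}\otimes(\mata_k^{(t)})^\top$ converge to $L_k^*=(\mata_{k+1}^*)^{-1}\otimes(\mata_k^*)^\top$ for every $k\in[d]$.

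Second, I would fix a representative of each class $[\vecmatA^{(t)}]$ by imposing $\|\mata_1^{(t)}\|_\mathrm{F}=1$ with a consistent sign convention (e.g.\ prescribing the sign of the first nonzero entry of $\mata_1^{(t)}$), thereby exhausting the $\mathbb{R}_*$-ambiguity and placing $\mata_1^{(t)}$ on the compact unit sphere of $\mathbb{R}^{r_1\times r_1}$. Along any subsequence, Bolzano--Weierstrass extracts $\mata_1^{(t_j)}\to\hat{\mata}_1$ with $\|\hat{\mata}_1\|_\mathrm{F}=1$, so at least one entry of $\hat{\mata}_1$ is nonzero. Using the $r_1\times r_1$ block structure of $L_1^{(t_j)}=(\mata_2^{(t_j)})^{-1}\otimes(\mata_1^{(t_j)})^\top$ and dividing convergent block entries of $L_1^{(t_j)}$ by an entry of $(\mata_1^{(t_j)})^\top$ that is eventually bounded away from zero, I obtain entrywise convergence $(\mata_2^{(t_j)})^{-1}\to\hat{B}_2$ with $L_1^*=\hat{B}_2\otimes\hat{\mata}_1^\top$. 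By uniqueness of Kronecker factors up to a scalar and the normalization of $\mata_1$, I can choose the representative of $\vecmatA^*$ so that $\hat{\mata}_1=\mata_1^*$ and $\hat{B}_2=(\mata_2^*)^{-1}$, which is invertible; inversion yields $\mata_2^{(t_j)}\to\mata_2^*$. Iterating the same Kronecker extraction for $k=2,3,\ldots,d$ produces a convergent tuple $\vecmatA^{(t_j)}\to\vecmatA^*\in\GL(\vecr)$ along the subsequence.

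Third, because every convergent subsequence of the normalized $\vecmatA^{(t)}$ shares the common limit $\vecmatA^*$, the elementary subsequence-of-subsequence principle in a metric space upgrades this to convergence of the full sequence; projecting by $\pi$ then delivers $[\vecmatA^{(t)}]\to[\vecmatA^*]$ in $\PGL(\vecr)$. The main obstacle throughout is the coupling between scaling freedom and matrix invertibility along the sequence: one must preclude both that the normalized $\mata_k^{(t)}$'s diverge in some component and that an extracted limit fails to be invertible. The Frobenius normalization of $\mata_1^{(t)}$ controls the former via compactness, while invertibility of each limiting $\hat{\mata}_k$ is forced because, after the iterated Kronecker extraction, $\hat{\vecmatA}$ realises the gauge $\vectensV^*=\theta_{\hat{\vecmatA}}(\vectensU^*)$ between two injective TR tensors, and \cref{thm: fundamental} itself then guarantees $\hat{\vecmatA}\in\GL(\vecr)$.
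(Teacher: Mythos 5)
Your proposal is correct in substance but takes a genuinely different route from the paper's proof, so let me compare them before flagging one small gap. Both arguments pivot on the identity $L_k^{(t)}:=(\mata_{k+1}^{(t)})^{-1}\otimes(\mata_k^{(t)})^\top=(\tensU_k^{(t)})_{(2)}^\dagger(\tensV_k^{(t)})_{(2)}$ and the normalization $\|\mata_1^{(t)}\|_\frob=1$, but diverge thereafter. The paper does not invoke \cref{thm: fundamental} at all here; instead it bounds the singular-value ratios $\sigma_{\max}(\mata_k^{(t)})/\sigma_{\min}(\mata_{k+1}^{(t)})$ by $\|(\tensU_k^{(t)})_{(2)}^\dagger\|_2\|\tensV_k^{(t)}\|_\frob$, propagates these bounds around the ring to show that every $\mata_k^{(t)}$ is bounded above and below in singular values, invokes Weierstrass for precompactness, and then proves uniqueness of the accumulation point via the auxiliary sequences $\tilde{\mata}_k^{(t)}=\mata_k^{(t)}(\mata_k^*)^{-1}$ whose Kronecker products tend to the identity. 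You instead pin down the target $[\vecmatA^*]$ at the outset through \cref{thm: fundamental}, get compactness ``for free'' on the $\mata_1$-component from the Frobenius-sphere normalization, and then propagate convergence around the ring by entrywise Kronecker-factor extraction, with invertibility of each limit forced by the invertibility of $L_k^*$. Your route is arguably cleaner in separating identification of the limit from the compactness argument, and it avoids the paper's explicit chain of singular-value estimates, at the cost of needing \cref{thm: fundamental} up front and a careful Kronecker-extraction lemma at each step. The paper's route is more self-contained at this point in the exposition (it derives the limiting gauge from $L_k^*$ rather than citing the fundamental theorem) and quantitatively controls the conditioning of the $\mata_k^{(t)}$, which is a stronger intermediate statement.

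There is one small gap in your third step. The claim that ``every convergent subsequence of the normalized $\vecmatA^{(t)}$ shares the common limit $\vecmatA^*$'' need not hold under the sign convention you describe. Prescribing the sign of the first nonzero entry of $\mata_1^{(t)}$ constrains an entry that may vanish in the limit; two subsequences of the normalized sequence can then converge to $\hat{\vecmatA}$ and $-\hat{\vecmatA}$ respectively, so the full normalized sequence in $\GL(\vecr)$ need not converge. The conclusion you want is nevertheless intact: since any two subsequential limits realize the same gauge at $\vectensU^*$ and share $\|\hat{\mata}_1\|_\frob=1$, they differ by a factor in $\{\pm 1\}$, hence project to the same class $[\vecmatA^*]\in\PGL(\vecr)$. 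The fix is to drop the sign convention entirely and run the subsequence-of-subsequence argument directly on $\{[\vecmatA^{(t)}]\}$ in the (metrizable) quotient $\PGL(\vecr)$: every subsequence admits a subsubsequence converging in $\GL(\vecr)$, and all such subsubsequential limits map to $[\vecmatA^*]$.
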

\begin{proof}	
    The proof proceeds in two steps: we first show that the sequence $\{[\vecmatA^{(t)}]\}$ is precompact in $\PGL(\vecr)$ through bounded singular values, and then prove convergence by identifying the unique accumulation point. 
    
	First, we prove that $\{[\vecmatA^{(t)}]\}$ has an accumulation point. We assume $\|\mata_1^{(t)}\|_\frob=1$ to eliminate the equivalence class in $\PGL(\vecr)$. The inequalities $\|\tensV_k^{(t)}-\tensV_k^*\|_\frob\leq\|\tensV_k^*\|_\frob/2$ holds and the smallest singular value satisfies $\sigma_{\min}((\tensU_k^{(t)})_{(2)})\geq\sigma_{\min}((\tensU_k^*)_{(2)})/2>0$ for some sufficiently large $t$. Therefore, it follows from $(\mata_{k+1}^{(t)})^{-1}\otimes(\mata_{k}^{(t)})^{\top}=(\tensU_k^{(t)})_{(2)}^\dagger(\tensV_k^{(t)})_{(2)}$ that
		\begin{align}
		\label{eq: ratio of singular values}
			\frac{\sigma_{\max}(\mata_{k}^{(t)})}{\sigma_{\min}(\mata_{k+1}^{(t)})}&=\|(\mata_{k+1}^{(t)})^{-1}\otimes(\mata_{k}^{(t)})^{\top}\|_2=\left\|(\tensU_k^{(t)})_{(2)}^\dagger(\tensV_k^{(t)})_{(2)}\right\|_2\nonumber\\
			&\leq\left\|(\tensU_k^{(t)})_{(2)}^\dagger\right\|_2\|\tensV_k^{(t)}\|_\frob\nonumber\\
			&\leq\left\|(\tensU_k^{(t)})_{(2)}^\dagger\right\|_2(\frac12\|\tensV_k^*\|_\frob+\|\tensV_k^*\|_\frob)\nonumber\\
			&\leq\frac{3\|\tensV_k^*\|_\frob}{\sigma_{\min}((\tensU_k^*)_{(2)})}
		\end{align}
	for $k\in[d]$, where $(\tensU_k^{(t)})_{(2)}^\dagger$ is the Moore--Penrose inverse of $(\tensU_k^{(t)})_{(2)}$. Hence, we obtain that $\prod_{k=1}^d\sigma_{\max}(\mata_k^{(t)})/\sigma_{\min}(\mata_k^{(t)})$ is bounded above for sufficiently large $t$. Since $\sigma_{\max}(\mata_k^{(t)})\geq\sigma_{\min}(\mata_k^{(t)})$, we obtain that $\sigma_{\max}(\mata_k^{(t)})/\sigma_{\min}(\mata_k^{(t)})$ is bounded above for $k\in[d]$. Therefore, we yield that $\{\sigma_{\max}(\mata_{1}^{(t)})\}_{t=1}^\infty$ is bounded above and 
    \begin{align*}
        \|\mata_{k}^{(t)}\|_\frob&\leq\sqrt{r_kr_{k+1}}\sigma_{\max}(\mata_{k}^{(t)})\\
        &\leq\sqrt{r_kr_{k+1}}C\sigma_{\max}(\mata_{k+1}^{(t)})\\
        &\leq\sqrt{r_kr_{k+1}}C^2\sigma_{\max}(\mata_{k+2}^{(t)})\\
        &\leq\cdots\\
        &\leq\sqrt{r_kr_{k+1}}C^{d-k}\sigma_{\max}(\mata_{d}^{(t)})\\
        &\leq\sqrt{r_kr_{k+1}}C^{d+1-k}\sigma_{\max}(\mata_{1}^{(t)})
    \end{align*}
	holds for all $k=2,3,\dots,d$ from recursively applying~\cref{eq: ratio of singular values}, where 
    \[C:=\max_{k\in[d]}\frac{3\|\tensV_k^*\|_\frob}{\sigma_{\min}((\tensU_k^*)_{(2)})}\]
    is a constant and $\mata_{d+1}=\mata_1$. Consequently, it follows from the Weierstrass Theorem that $\{\vecmatA^{(t)}\}$ has an accumulation point $\vecmatA^*$ in $\GL(\vecr)$ and thus $\{[\vecmatA^{(t)}]\}$ has an accumulation point $[\vecmatA^*]$ in $\PGL(\vecr)$. 
	
	Second, we prove that $\{[\vecmatA^{(t)}]\}$ converges to $[\vecmatA^*]$ in $\PGL(\vecr)$ by constructing auxiliary sequences $\{\tilde{\mata}_k^{(t)}\}_{t=0}^{\infty}$ for $k\in[d]$. We observe that $(\mata_{k+1}^{(t)})^{-1}\otimes(\mata_{k}^{(t)})^{\top}=(\tensU_k^{(t)})_{(2)}^\dagger(\tensV_k^{(t)})_{(2)}$ converges to $(\tensU_k^*)_{(2)}^\dagger(\tensV_k^*)_{(2)}$. Therefore, it holds that $(\mata_{k+1}^*)^{-1}\otimes(\mata_{k}^*)^{\top}=(\tensU_k^*)_{(2)}^\dagger(\tensV_k^*)_{(2)}$, which is invertible. We obtain that 
    \[\lim_{t\to\infty}(\mata_{k+1}^*(\mata_{k+1}^{(t)})^{-1})\otimes(\mata_{k}^{(t)}(\mata_{k}^*)^{-1})^{\top}=\matI_{r_kr_{k+1}}=\matI_{r_{k+1}}\otimes\matI_{r_k}.\] 
    Denote $\tilde{\mata}_k^{(t)}:=\mata_{k}^{(t)}(\mata_{k}^*)^{-1}$, it holds that $(\tilde{\mata}_{k+1}^{(t)})^{-1}\otimes(\tilde{\mata}_k^{(t)})^\top$ converges to $\matI_{r_kr_{k+1}}$. 
	
	We prove that the auxiliary sequence $\{\tilde{\mata}_k^{(t)}\}_{t=0}^{\infty}$ converges to $c_k^{-1}\matI_{r_k}$ for some $c_k\neq 0$. In fact, since $\{\sigma_{\max}(\mata_{1}^{(t)})\}$ is bounded and $\|\mata_1^{(t)}\|_\frob=1$ by assumption, it follows from~\cref{eq: ratio of singular values} that $\{\sigma_{\max}(\mata_{k}^{(t)})\}$ and $\{\sigma_{\min}(\mata_{k}^{(t)})\}$ is bounded above and below, respectively. Therefore, it holds that 
	\begin{equation*}
		\begin{aligned}
			\sigma_{\max}(\tilde{\mata}_k^{(t)})&=\sigma_{\max}(\mata_{k}^{(t)}(\mata_{k}^*)^{-1})\leq\sigma_{\max}(\mata_{k}^{(t)})\sigma_{\max}((\mata_{k}^*)^{-1})\\
            &\leq\sigma_{\max}((\mata_{k}^*)^{-1})C^{d+1-k},\\
			\sigma_{\min}(\tilde{\mata}_k^{(t)})&=\sigma_{\max}((\tilde{\mata}_k^{(t)})^{-1})=\sigma_{\max}(\mata_{k}^*(\mata_{k}^{(t)})^{-1})\\
            &\geq\sigma_{\min}((\mata_{k}^{(t)})^{-1})\sigma_{\min}(\mata_{k}^*)=\frac{\sigma_{\min}(\mata_{k}^*)}{\sigma_{\max}(\mata_{k}^{(t)})}\\
            &\geq\sigma_{\min}(\mata_{k}^*)C^{-(d+1-k)}>0,
		\end{aligned}
	\end{equation*}
	which is bounded above and below, respectively.

    Consider the off-diagonal elements of $(\tilde{\mata}_{k+1}^{(t)})^{-1}$ and recall that $(\tilde{\mata}_{k+1}^{(t)})^{-1}\otimes(\tilde{\mata}_k^{(t)})^\top$ converges to $\matI_{r_kr_{k+1}}$, i.e., $((\tilde{\mata}_{k+1}^{(t)})^{-1})_{i,j}(\tilde{\mata}_k^{(t)})^\top$ converges to $0$ if $i\neq j$ or $\matI_{r_{k}}$ if $i=j$. Since $\|\tilde{\mata}_{k}^{(t)}\|_\frob\geq\sigma_{\min}(\tilde{\mata}_k^{(t)})>0$, it holds that $\lim_{t\to\infty}((\tilde{\mata}_{k+1}^{(t)})^{-1})_{i,j}=0$ for $i\neq j$. By using 
    \[\lim_{t\to\infty}((\tilde{\mata}_{k+1}^{(t)})^{-1})_{i,i}(\tilde{\mata}_k^{(t)})^\top-((\tilde{\mata}_{k+1}^{(t)})^{-1})_{j,j}(\tilde{\mata}_k^{(t)})^\top=\matI_{r_k}-\matI_{r_k}=0\] 
    for $i\neq j$, we also obtain that $((\tilde{\mata}_{k+1}^{(t)})^{-1})_{i,i}-((\tilde{\mata}_{k+1}^{(t)})^{-1})_{j,j}$ converge to $0$. Therefore, there exists $c_{k+1}\neq 0$ such that $(\tilde{\mata}_{k+1}^{(t)})^{-1}$ converges to $c_{k+1}\matI_{r_{k+1}}$, which implies $\lim_{t\to\infty}\tilde{\mata}_{k+1}^{(t)}=c_{k+1}^{-1}\matI_{r_{k+1}}$. 
    
    Since
    \[\lim_{t\to\infty}(\tilde{\mata}_{k+1}^{(t)})^{-1}\otimes(\tilde{\mata}_k^{(t)})^\top=\frac{c_{k+1}}{c_k}\matI_{r_kr_{k+1}}=\matI_{r_kr_{k+1}},\]it holds that $c_k=c_{k+1}$ for all $k\in[d]$. Consequently, $\{[\vecmatA^{(t)}]\}$ converges to $[\vecmatA^*]$. 
\end{proof}

\begin{remark}
    It is worth noting that the proofs of the properness for TT and TR is essentially different. For TT decomposition (or more generally, hierarchical Tucker decomposition), the properness follows directly from a sequential construction of the Moore--Penrose pseudoinverses~\cite[Lemma 1]{uschmajew2013geometry} along the chain structure. In contrast, due to the ring structure of the TR decomposition, such a sequential procedure is no longer applicable, as the gauge invariance globally couples all core tensors. As a remedy, we exploit the full-rank conditions on the core tensors and carefully control the singular values to establish the properness.
\end{remark}

As a direct result, it follows from Propositions~\ref{prop: free} and~\ref{prop: proper} and~\cite[Theorem 9.18]{boumal2023intromanifolds} that $\tensM_\vecr^*=\bartensM_\vecr^*/\PGL(\vecr)$ is a smooth quotient manifold indeed. 
\begin{theorem}[quotient manifold]\label{thm:TRquotient}
	$\tensM_\vecr^*$ is a smooth quotient manifold of dimension 
	\[\dim(\tensM_\vecr^*)=\sum_{k=1}^{d}r_kn_kr_{k+1}-\sum_{k=1}^{d}r_k^2+1.\]
\end{theorem}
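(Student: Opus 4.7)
The plan is to invoke the standard quotient manifold theorem for Lie group actions, since essentially all the technical work has already been carried out in Propositions~\ref{prop: free} and~\ref{prop: proper}. First I would observe that $\bartensM_\vecr^*$ is an open subset of the Euclidean product space $\mathbb{R}^{r_1\times n_1\times r_2}\times\cdots\times\mathbb{R}^{r_d\times n_d\times r_1}$, because the full-rank condition $\rank((\tensU_k)_{(2)})=r_kr_{k+1}$ defines an open locus in each factor. Hence $\bartensM_\vecr^*$ is a smooth manifold of dimension $\sum_{k=1}^d r_k n_k r_{k+1}$. In parallel, $\PGL(\vecr)$ is a Lie group, obtained from $\GL(\vecr)$ by quotienting out the closed one-dimensional normal subgroup of uniform scalars $\{(c\matI_{r_1},\dots,c\matI_{r_d}):c\in\mathbb{R}_*\}\cong\mathbb{R}_*$.

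Next I would apply~\cite[Theorem 9.18]{boumal2023intromanifolds} directly: since Proposition~\ref{prop: free} establishes that the action $\theta$ of $\PGL(\vecr)$ on $\bartensM_\vecr^*$ is free and Proposition~\ref{prop: proper} establishes that it is proper, the quotient set $\tensM_\vecr^*=\bartensM_\vecr^*/\PGL(\vecr)$ inherits a unique smooth manifold structure for which the canonical projection $\pi$ is a smooth submersion. The dimension is then governed by
\[\dim(\tensM_\vecr^*)=\dim(\bartensM_\vecr^*)-\dim(\PGL(\vecr)).\]
Using $\dim(\GL(\vecr))=\sum_{k=1}^d r_k^2$ and subtracting the one-dimensional scalar factor gives $\dim(\PGL(\vecr))=\sum_{k=1}^d r_k^2-1$, from which the stated formula $\sum_{k=1}^d r_k n_k r_{k+1}-\sum_{k=1}^d r_k^2+1$ follows at once. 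The $+1$ is precisely the bookkeeping contribution of the global scaling ambiguity that was factored out when passing from $\GL(\vecr)$ to $\PGL(\vecr)$, contrasting with the tensor train case where no such ambiguity is present.

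At this stage no substantive obstacle remains: the proof reduces to invoking a standard black-box theorem together with a dimension count. The only minor point I would want to record explicitly is that $\mathbb{R}_*\hookrightarrow\GL(\vecr)$ embeds as a closed normal Lie subgroup—immediate since uniform scalars commute with every element of $\GL(\vecr)$ and $\mathbb{R}_*$ is closed in it—so that the projective quotient $\PGL(\vecr)$ is genuinely a Lie group of dimension $\sum_{k=1}^d r_k^2-1$. This observation is the sole place where the ring structure, as opposed to the open-chain TT structure, surfaces in the final manifold-structure argument, and it is handled automatically once the projective group has been defined.
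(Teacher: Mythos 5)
Your proposal matches the paper's own proof essentially exactly: the paper likewise deduces the quotient manifold structure directly from Propositions~\ref{prop: free} and~\ref{prop: proper} together with~\cite[Theorem 9.18]{boumal2023intromanifolds}, with the dimension following from $\dim(\bartensM_\vecr^*)-\dim(\PGL(\vecr))=\sum_{k}r_kn_kr_{k+1}-(\sum_k r_k^2-1)$. Your additional explicit remarks---that $\bartensM_\vecr^*$ is open in the product Euclidean space and that $\PGL(\vecr)$ is a Lie group since the uniform scalars form a closed central (hence normal) subgroup---are correct and are implicitly relied upon in the paper as well.
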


Denote the canonical projection of $\tensM_\vecr^*$ by $\pi:\bartensM_\vecr^*\to\tensM_\vecr^*$. \cref{fig: comm diagram} provides a commutative diagram of injective TR tensors, where we recall that $\tau:\bartensM_{\vecr}\to\mathbb{R}^{n_1\times n_2\times\cdots\times n_d}$ is a smooth mapping defined by $\tau:\vectensU\mapsto\llbracket\tensU_1,\tensU_2,\dots,\tensU_d\rrbracket$. Note that $\tau|_{\tensM_\vecr^*}=\tau\circ\pi^{-1}$ is bijective from~\cref{thm: fundamental}. 

\begin{figure}[htbp]
	\centering
	\begin{tikzpicture}
		\def\x{1.6};

		\node (Mbar) at (0,\x) {$\bartensM_\vecr^*$};
		\node (M) at (0,0) {$\tensM_\vecr^*=\bartensM_\vecr^*/\PGL(\vecr)$};
		\node (N) at (3*\x,0) {$\mathbb{R}^{n_1\times n_2\times\cdots\times n_d}$};
		
		\draw[->] (Mbar) -- (M);
		\draw[->] (M) -- (N);
		\draw[->] (Mbar) -- ($(N)+(-0.95,0.25)$);
		
		\node[left] at ($0.5*(Mbar)+0.5*(M)$) {$\pi$};
		\node[below] at ($0.5*(M)+0.5*(N)$) {$\tau$};
		\node[above] at ($0.5*(Mbar)+0.5*(N)$) {$\tau\circ\pi$};
	\end{tikzpicture}
	\caption{Commutative diagram of injective TR tensors}
	\label{fig: comm diagram}
\end{figure}

\subsection{Vertical and horizontal spaces}
Recall that given $\vectensU\in\bartensM_\vecr^*$, the vertical space is defined by $\vertical_\vectensU\!\bartensM_\vecr^*=\tangent_\vectensU\!\tensF_\vectensU=\ker\mathrm{D}\pi(\vectensU)$, where $\tensF_\vectensU=\{\theta_{[\vecmatA]}(\tensU):[\vecmatA]\in\PGL(\vecr)\}$ is the orbit of $\vectensU$ through group action $\theta$. The orthogonal complement of $\vertical_\vectensU\!\bartensM_\vecr^*$ with respect to the Riemannian metric $g$ is the horizontal space $\horizontal_\vectensU\!\bartensM_\vecr^*$. Note that the Riemannian metric is chosen as the Euclidean metric. A schematic illustration of quotient manifold, vertical and horizontal spaces is shown in~\cref{fig: Manifold}.

\begin{figure}[htbp]
	\centering
	\begin{tikzpicture}[scale = 0.8]
		\coordinate (u) at (0,0); 
		\coordinate (horizontal) at (1.6,0); 
		\coordinate (vertical) at (0,2); 
		\coordinate (hs) at ($(u)-(1.2,0.5)$);
		\fill (u) circle (2.5pt);
		\node[right] at (u) {$\vectensU$};
		\draw[magenta,-,very thick] ($(u)+(-0.55,-2)$) arc (-30:30:4);
		\draw[-] ($(u)+(-0.55,-2)+(horizontal)$) arc (-30:30:4);
		\draw[-] ($(u)+(-0.55,-2)-(horizontal)$) arc (-30:30:4);
		\node[right] (Fu) at ($(u)+(-0.55,-2)$) {$\tensF_\vectensU$};
		\node[left] (m) at ($(u)-1.5*(horizontal)$) {$\bartensM_\vecr^*$};
		\draw[-] ($(u)-(vertical)$) -- ($(u)+(vertical)$);
		\node[right] at ($(u)+(vertical)$) {$\vertical_\vectensU\!\bartensM_\vecr^*$};
		\path[draw] (hs) -- ($(hs)+(2,0)$) -- ($(hs)+(2.4,1)$) -- ($(hs)+(0.4,1)$) -- cycle;
		\node[right] at ($(hs)+(2,0)$) {$\horizontal_\vectensU\!\bartensM_\vecr^*$};
		
		\node[above] at ($(u)+2*(horizontal)$) {\Large$\pi$};
		\draw[thick,->] ($(u)+1.5*(horizontal)$) -- ($(u)+2.5*(horizontal)$);
		\coordinate (piu) at ($(u)+4*(horizontal)$);
		\fill[magenta] (piu) circle (4pt);
		\node[above] at (piu) {$[\vectensU]$};
		\draw[-] ($(piu)+(2,-0.55)$) arc (60:120:4);
		\node[below] at ($(piu)+(2,-0.55)$) {$\bartensM_\vecr^*/\PGL(\vecr)$};
		\draw[-] ($(piu)+(horizontal)$) -- ($(piu)-(horizontal)$);
		\node[right] at ($(piu)+(horizontal)$) {$\tangent_{[\vectensU]}\!\tensM_\vecr^*$};
	\end{tikzpicture}
	\caption{Illustration of total manifold $\bartensM_\vecr^*$, quotient manifold $\tensM_\vecr^*=\bartensM_\vecr^*/\PGL(\vecr)$, vertical and horizontal spaces.}
	\label{fig: Manifold}
\end{figure}

\begin{proposition}\label{thm:vertical_and_horizontal}
	The vertical space and horizontal space at $\vectensU\in\bartensM_\vecr^*$ can be parametrized by
	\begin{align}
		\vertical_\vectensU\!\bartensM_\vecr^*&=\left\{\vec{\eta}\in\bartensM_\vecr:\eta_k=\tensU_k\times_1\matd_k-\tensU_k\times_3\matd_{k+1}^\top,\ \tr(\matd_{1})=0\ \text{for}\ k\in[d]\right\},\label{eq: vertical}\\
		\horizontal_\vectensU\!\bartensM_\vecr^*&=\left\{\vec\xi\in\bartensM_\vecr:
			(\xi_k)_{(1)}(\tensU_k)_{(1)}^\top=(\tensU_{k-1})_{(3)}(\xi_{k-1})_{(3)}^\top\ \text{for}\ k\in[d]
		\right\},\label{eq: horizontal}
	\end{align}
	where  $\tensU_0:=\tensU_d$ and $\xi_0:=\xi_d$. 
\end{proposition}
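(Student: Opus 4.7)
The plan is to first compute the vertical space by differentiating the $\PGL(\vecr)$-orbit of $\vectensU$, and then obtain the horizontal space as the Euclidean orthogonal complement.

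For the vertical space, I would parametrize smooth curves through the identity of $\GL(\vecr)$ by $\mata_k(t) = \matI_{r_k} + t\matD_k + O(t^2)$, so that $\mata_{k+1}(t)^{-\top} = \matI_{r_{k+1}} - t\matD_{k+1}^\top + O(t^2)$. Differentiating the orbit curve $\theta_{\vecmatA(t)}^k(\tensU_k) = \tensU_k \times_1 \mata_k(t) \times_3 \mata_{k+1}(t)^{-\top}$ at $t = 0$ then yields the candidate form $\eta_k = \tensU_k \times_1 \matD_k - \tensU_k \times_3 \matD_{k+1}^\top$. The subtle part is to identify the kernel of the parametrization $\phi : (\matD_1, \dots, \matD_d) \mapsto (\eta_k)_k$, as this is what governs the normalization $\tr(\matD_1) = 0$. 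Writing $\eta_k = 0$ in the mode-2 unfolding gives $(\tensU_k)_{(2)}(\matI_{r_{k+1}} \otimes \matD_k^\top - \matD_{k+1} \otimes \matI_{r_k}) = 0$, and the full-rank condition $\rank((\tensU_k)_{(2)}) = r_k r_{k+1}$ built into $\bartensM_\vecr^*$ reduces this to $\matI_{r_{k+1}} \otimes \matD_k^\top = \matD_{k+1} \otimes \matI_{r_k}$. A standard Kronecker-product argument then forces $\matD_k = c_k \matI_{r_k}$ for each $k$, and the ring closure $\mata_{d+1} = \mata_1$ forces $c_1 = c_2 = \cdots = c_d$. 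Hence $\ker \phi = \{c(\matI_{r_1}, \dots, \matI_{r_d}) : c \in \mathbb{R}\}$, matching the Lie algebra of the $\mathbb{R}_*$-factor quotiented out in $\PGL(\vecr)$; since $\tr(\matD_1) = c r_1$ on $\ker \phi$, imposing $\tr(\matD_1) = 0$ selects a unique representative per equivalence class.

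For the horizontal space, I would compute the Euclidean inner product $\langle \vec\xi, \vec\eta \rangle = \sum_k \langle \xi_k, \eta_k \rangle$ for an arbitrary vertical $\vec\eta$. Using the mode-1 and mode-3 identities $(\tensU_k \times_1 \matD_k)_{(1)} = \matD_k (\tensU_k)_{(1)}$ and $(\tensU_k \times_3 \matD_{k+1}^\top)_{(3)} = \matD_{k+1}^\top (\tensU_k)_{(3)}$ together with the cyclic property of the trace, this becomes
\[
\langle \vec\xi, \vec\eta \rangle = \sum_{k=1}^d \bigl\langle (\xi_k)_{(1)}(\tensU_k)_{(1)}^\top,\, \matD_k \bigr\rangle - \sum_{k=1}^d \bigl\langle (\tensU_k)_{(3)}(\xi_k)_{(3)}^\top,\, \matD_{k+1} \bigr\rangle.
\]
Reindexing the second sum via $k \mapsto k-1$ under the cyclic convention $\tensU_0 := \tensU_d$, $\xi_0 := \xi_d$ merges the two sums into $\sum_k \langle \matM_k, \matD_k \rangle$ with $\matM_k := (\xi_k)_{(1)}(\tensU_k)_{(1)}^\top - (\tensU_{k-1})_{(3)}(\xi_{k-1})_{(3)}^\top$. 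Requiring this to vanish for all $(\matD_k)$ yields $\matM_k = 0$ for every $k$, which is exactly the stated characterization.

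As a consistency check, the $d$ matrix equations $\matM_k = 0$ carry exactly one linear redundancy, since telescoping gives $\sum_k \tr(\matM_k) = \sum_k[\langle \xi_k, \tensU_k \rangle - \langle \xi_{k-1}, \tensU_{k-1} \rangle] = 0$ identically. This yields $\dim \horizontal_\vectensU \bartensM_\vecr^* = \sum_k r_k n_k r_{k+1} - \sum_k r_k^2 + 1 = \dim \tensM_\vecr^*$, in agreement with \cref{thm:TRquotient}. The main obstacle throughout is the kernel analysis in the vertical-space step: without the full-rank condition on $(\tensU_k)_{(2)}$ the Kronecker equation does not force $\matD_k$ to be scalar, and without the ring structure the scalars $c_k$ would fail to synchronize around the cycle.
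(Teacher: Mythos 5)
Your proof is correct and follows essentially the same route as the paper's: the vertical space via curves through the identity of the Lie group and the full-rank/Kronecker argument, the horizontal space via the Euclidean orthogonal complement and the telescoping trace identity.

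One expositional point is worth flagging. In the horizontal-space step you write that "requiring this to vanish for all $(\matD_k)$ yields $\matM_k = 0$," but the vertical space you just parametrized is restricted to $(\matD_k)$ with $\tr(\matD_1) = 0$, so a priori one only gets $\matM_k = 0$ for $k \geq 2$ together with $\matM_1 = c\matI_{r_1}$ for some $c$. The paper handles this explicitly and then uses the telescoping identity $\sum_k \tr(\matM_k) = 0$ to force $c = 0$. Your "consistency check" at the end is precisely that identity; it is doing necessary work, not merely sanity-checking. Alternatively, you could close the gap up front by noting that the hyperplane $\{\tr(\matD_1)=0\}$ is transversal to $\ker\phi$, so the image of $\phi$ (hence the orthocomplement) is unchanged if you drop the trace constraint and quantify over all $(\matD_k)$ — which is what your sentence implicitly assumes. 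Either fix makes the argument airtight, and both are equivalent in content to the paper's treatment.
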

\begin{proof}
	First, we aim to characterize the vertical space. Denote $V=\{\vec\eta\in\bartensM_\vecr:\eta_k=\tensU_k\times_1\matd_k-\tensU_k\times_3\matd_{k+1}^\top,\ \tr(\matd_{1})=0,\ k\in[d]\}$. On the one hand, consider the smooth curve $\gamma:[0,t_0)\to\tensF_\vectensU=\{\theta_{[\vecmatA]}(\tensU):[\vecmatA]\in\PGL(\vecr)\}$ defined by
	\[\gamma(t)=(\eta_1(t),\dots,\eta_d(t))\ \text{with} \ \eta_k(t)=\tensU_k\times_1(\matI_{r_k}+t\matd_k)\times_3(\matI_{r_{k+1}}+t\matd_{k+1})^{-\top}\]
	for $t\in[0,t_0)$. It is straightforward to verify that $\dot{\gamma}(0)\in V$. Therefore, $V\subseteq\vertical_\vectensU\!\bartensM_\vecr^*$. On the other hand, we aim to prove that $\dim(V)=\sum_{k=1}^{d} r_k^2-1=\dim(\vertical_\vectensU\!\bartensM_\vecr^*)$, or equivalently, $\vec\eta=0\in V$ implies that $\matd_k=0$ for $k\in[d]$. We start from $(\eta_1)_{(2)}=(\matu_1)_{(2)}(\matI_{r_2}\otimes\matd_1-\matd_2^\top\otimes\matI_{r_1})^\top=0$. Since $(\matu_1)_{(2)}$ is of full-rank, we obtain that $\matI_{r_2}\otimes\matd_1=\matd_2^\top\otimes\matI_{r_1}$. Therefore, the off-diagonal elements of $\matd_2$ are zeros and thus both $\matd_1$ and $\matd_2$ are diagonal matrices. Furthermore, it follows from $\matI_{r_2}\otimes\matd_1=\matd_2^\top\otimes\matI_{r_1}$ that $\matd_1=\matd_{2}(j,j)\matI_{r_1}$ holds for $j\in[r_2]$. Since $\tr(\matd_{1})=0$, we have $\matd_{2}(j,j)=0$ and thus $\matd_1=0$ and $\matd_2=0$. Consequently, we obtain that $\matd_k=0$ recursively by using the fact that $(\tensU_k)_{(2)}$ is of full rank.  
	
	Subsequently, we characterize the horizontal space. Denote the right hand side of~\cref{eq: horizontal} by $H$. In fact, for all $\vec\xi\in\bartensM_\vecr$ and $\vec\eta\in\vertical_\vectensU\!\bartensM_\vecr^*$, we have 
	\begin{equation*}
		\begin{aligned}
			\langle\vec\xi,\vec\eta\rangle&=\sum_{k=1}^{d}\langle\xi_k,\tensU_k\times_1\matd_k-\tensU_k\times_3\matd_{k+1}^\top\rangle\\
			&=\sum_{k=1}^{d}\langle(\xi_k)_{(1)},\matd_k(\tensU_k)_{(1)}\rangle-\langle(\xi_k)_{(3)},\matd_{k+1}^\top(\tensU_k)_{(3)}\rangle\\
			&=\sum_{k=1}^{d}\langle(\xi_k)_{(1)}(\tensU_k)_{(1)}^\top-(\tensU_{k-1})_{(3)}(\xi_{k-1})_{(3)}^\top,\matd_k\rangle.
		\end{aligned}
	\end{equation*} 
	On the one hand, if $\vec\xi\in H$, we have $\langle\vec\xi,\vec\eta\rangle=0$
	holds for all $\matd_k\in\mathbb{R}^{r_k\times r_k}$ and $\tr(\matd_1)=0$. Therefore, $H\subseteq\vertical_\vectensU^\perp\!\bartensM_\vecr^*=\horizontal_\vectensU\!\bartensM_\vecr^*$. On the other hand, for all $\vec\xi\in\horizontal_\vectensU\!\bartensM_\vecr^*$, we obtain that 
	\begin{equation*}
		\begin{aligned}
			(\xi_k)_{(1)}(\tensU_k)_{(1)}^\top&=(\tensU_{k-1})_{(3)}(\xi_{k-1})_{(3)}^\top\ \text{for}\ k=2,3,\dots,d,\\
			(\xi_1)_{(1)}(\tensU_1)_{(1)}^\top&=(\tensU_{d})_{(3)}(\xi_{d})_{(3)}^\top+c\matI_{r_1},\ c\in\mathbb{R},
		\end{aligned}
	\end{equation*}
	Furthermore, we observe that 
	\[\langle\xi_k,\tensU_k\rangle=\tr((\xi_k)_{(1)}(\tensU_k)_{(1)}^\top)=\tr((\tensU_{k-1})_{(3)}(\xi_{k-1})_{(3)}^\top)=\langle\xi_{k-1},\tensU_{k-1}\rangle\]
	holds for $k=2,3,\dots,d$. Therefore, we have $c=0$ and thus $\horizontal_\vectensU\!\bartensM_\vecr^*\subseteq H$. Finally, the horizontal space can be characterized by~\cref{eq: horizontal}. 
\end{proof}

It is worth noting that despite there being $\sum_{k=1}^d r_k^2$ equations in the parametrization of $\horizontal_\vectensU\!\bartensM_\vecr^*\subseteq H$ with respect to $\xi$ in~\cref{eq: horizontal}, the redundancy can be detected. Specifically, we compute the trace of both sides of $(\xi_k)_{(1)}(\tensU_k)_{(1)}^\top=(\tensU_{k-1})_{(3)}(\xi_{k-1})_{(3)}^\top$ and yield $d$ equations $\tr((\xi_k)_{(1)}(\tensU_k)_{(1)}^\top)=\tr((\tensU_{k-1})_{(3)}(\xi_{k-1})_{(3)}^\top)$ for $k\in[d]$, where the $d$-th equation $\tr((\xi_d)_{(1)}(\tensU_d)_{(1)}^\top)=\tr((\tensU_{d-1})_{(3)}(\xi_{d-1})_{(3)}^\top)$ is redundant as it is straightforwardly implied by the other $(d-1)$ equations. Consequently, there are $\sum_{k=1}^d r_k^2-1$ linear-independent equations in~\cref{eq: horizontal}. Such redundancy appears from the ring structure of TR.  

In the light of~\cref{thm:vertical_and_horizontal}, we provide the orthogonal projection of a vector $\vectensV\in\bartensM_\vecr$ onto the vertical and horizontal spaces.
\begin{proposition}\label{eq:v_h_TR}
	Given $\vectensV\in\bartensM_\vecr$ and $\vectensU\in\bartensM_\vecr^*$, the orthogonal projection of $\vectensV$ onto $\vertical_\vectensU\!\bartensM_\vecr^*$ and $\horizontal_\vectensU\!\bartensM_\vecr^*$ can be computed by
	\begin{equation*}
		\begin{aligned}
			\proj_{\vertical_\vectensU\!\bartensM_\vecr^*}\vectensV&=(\tensU_1\times_1\matd_1-\tensU_1\times_3\matd_2^\top,\dots,\tensU_d\times_1\matd_d-\tensU_d\times_3\matd_1^\top),\\
			\proj_{\horizontal_\vectensU\!\bartensM_\vecr^*}\vectensV&=\vectensV-\proj_{\vertical_\vectensU\!\bartensM_\vecr^*}\vectensV,
		\end{aligned}
	\end{equation*}
	where $\matd_k\in\mathbb{R}^{r_k\times r_k}$ is determined by the matrix equations
	\begin{equation}\label{eq: projection}
		\begin{aligned}
			\begin{bmatrix}
				\mata_1 & \matb_1 &  &  & \matb_d^\top\\
				\matb_1^\top & \mata_2 & \matb_2 & & \\
				& \ddots & \ddots & \ddots & \\
				& & \matb_{d-2}^\top & \mata_{d-1} & \matb_{d-1}\\
				\matb_d & & & \matb_{d-1}^\top & \mata_d
			\end{bmatrix}
			\begin{bmatrix}
				\rmvec(\matd_1)\\
				\rmvec(\matd_2)\\
				\vdots\\
				\rmvec(\matd_{d-1})\\
				\rmvec(\matd_d)
			\end{bmatrix}&=
			\begin{bmatrix}
				\vecb_1\\
				\vecb_2\\
				\vdots\\
				\vecb_{d-1}\\
				\vecb_d
			\end{bmatrix},\\
			\tr(\matd_{1})&=0,
		\end{aligned}
	\end{equation}
	where 
    \begin{align*}
        \mata_k&=\matI_{r_k}\otimes((\tensU_{k-1})_{(3)}^{}(\tensU_{k-1})_{(3)}^\top)+((\tensU_k)_{(1)}^{}(\tensU_k)_{(1)}^\top)\otimes\matI_{r_k},\\
        \matb_k&=-((\tensU_k)_{(1)}\otimes\matI_{r_k})(\matI_{r_{k+1}}\otimes(\tensU_k)_{(3)}^\top),\\
        \vecb_k&=\rmvec((\tensV_k)_{(1)}(\tensU_k)_{(1)}^\top-(\tensU_{k-1})_{(3)}(\tensV_{k-1})_{(3)}^\top)
    \end{align*}
    for $k\in[d]$. Moreover, the linear system~\cref{eq: projection} has a unique solution.
\end{proposition}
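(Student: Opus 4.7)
The plan is to combine the parametrizations of the vertical and horizontal spaces from \cref{thm:vertical_and_horizontal} to reduce the computation of $\proj_{\vertical_\vectensU\!\bartensM_\vecr^*}\vectensV$ to a linear system in the matrices $\matd_k$. First, write $\proj_{\vertical_\vectensU\!\bartensM_\vecr^*}\vectensV=(\eta_1,\dots,\eta_d)$ with $\eta_k=\tensU_k\times_1\matd_k-\tensU_k\times_3\matd_{k+1}^\top$ and $\matd_{d+1}:=\matd_1$; by \cref{thm:vertical_and_horizontal} this parametrization lands in $\vertical_\vectensU\!\bartensM_\vecr^*$ as soon as $\tr(\matd_1)=0$. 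The defining property of the orthogonal projection under the Euclidean metric then reduces to requiring the residual $\xi_k:=\tensV_k-\eta_k$ to lie in $\horizontal_\vectensU\!\bartensM_\vecr^*$, i.e., to satisfy $(\xi_k)_{(1)}(\tensU_k)_{(1)}^\top=(\tensU_{k-1})_{(3)}(\xi_{k-1})_{(3)}^\top$ for every $k\in[d]$.

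Next, substitute $\xi_k=\tensV_k-\eta_k$ into this horizontal condition and expand via the unfolding identities $(\tensU_k\times_1\matd_k)_{(1)}=\matd_k(\tensU_k)_{(1)}$, $(\tensU_k\times_3\matd_{k+1}^\top)_{(1)}=(\tensU_k)_{(1)}(\matd_{k+1}\otimes\matI_{n_k})$, $(\tensU_{k-1}\times_1\matd_{k-1})_{(3)}=(\tensU_{k-1})_{(3)}(\matI_{n_{k-1}}\otimes\matd_{k-1}^\top)$, and $(\tensU_{k-1}\times_3\matd_k^\top)_{(3)}=\matd_k^\top(\tensU_{k-1})_{(3)}$. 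Isolating the unknowns yields for each $k$ a matrix equation whose left-hand side depends linearly on $(\matd_{k-1},\matd_k,\matd_{k+1})$ and whose right-hand side is $(\tensV_k)_{(1)}(\tensU_k)_{(1)}^\top-(\tensU_{k-1})_{(3)}(\tensV_{k-1})_{(3)}^\top$, matching $\vecb_k$ verbatim. Vectorizing through $\rmvec(\matA\matX\matB)=(\matB^\top\otimes\matA)\rmvec(\matX)$ extracts the diagonal block $\mata_k$ directly from the two terms $\matd_k(\tensU_k)_{(1)}(\tensU_k)_{(1)}^\top$ and $(\tensU_{k-1})_{(3)}(\tensU_{k-1})_{(3)}^\top\matd_k$, while the off-diagonal blocks $\matb_k$ and the cyclic corners $\matb_d,\matb_d^\top$ arise from the coupling terms combined with the identifications $\tensU_0=\tensU_d$ and $\matd_{d+1}=\matd_1$.

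To prove uniqueness, I would pair the $k$-th matrix equation with $\matd_k$ under the Frobenius inner product and sum cyclically. After the cyclic index shift, the left-hand side telescopes into $\sum_{k=1}^d\langle\eta_k,\tensU_k\times_1\matd_k-\tensU_k\times_3\matd_{k+1}^\top\rangle=\sum_{k=1}^d\|\eta_k\|_\frob^2$, which shows that the coefficient matrix is symmetric positive semi-definite and that its kernel consists precisely of those $(\matd_1,\dots,\matd_d)$ for which every $\eta_k$ vanishes. By the parametrization argument in the proof of \cref{thm:vertical_and_horizontal} (applied without invoking $\tr(\matd_1)=0$), this kernel is the one-dimensional subspace $\{(c\matI_{r_1},\dots,c\matI_{r_d}):c\in\mathbb{R}\}$. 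Adjoining the linear constraint $\tr(\matd_1)=0$ then forces $c=0$ since $\tr(c\matI_{r_1})=cr_1$, so the augmented system has trivial kernel and is uniquely solvable. The formula for $\proj_{\horizontal_\vectensU\!\bartensM_\vecr^*}\vectensV$ follows immediately from the orthogonal decomposition $\tangent_\vectensU\!\bartensM_\vecr^*=\vertical_\vectensU\!\bartensM_\vecr^*\oplus\horizontal_\vectensU\!\bartensM_\vecr^*$.

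The most delicate step will be casting the coupling term in the stated form $\matb_k=-((\tensU_k)_{(1)}\otimes\matI_{r_k})(\matI_{r_{k+1}}\otimes(\tensU_k)_{(3)}^\top)$. A direct application of $\rmvec(\matA\matX\matB)=(\matB^\top\otimes\matA)\rmvec(\matX)$ to $(\tensU_k)_{(1)}(\matd_{k+1}\otimes\matI_{n_k})(\tensU_k)_{(1)}^\top$ produces a Kronecker expression involving $\rmvec(\matd_{k+1}\otimes\matI_{n_k})$ that is not manifestly of the asserted shape; bringing it into that shape requires exploiting the fact that $(\tensU_k)_{(1)}$ and $(\tensU_k)_{(3)}$ are two unfoldings of the same core tensor $\tensU_k$ and a mixed-product reduction. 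I would verify this identity by evaluating both operators on rank-one test matrices $\matd_{k+1}=\vecu\vecv^\top$, which sidesteps the permutation-matrix bookkeeping relating $\rmvec$ of the mode-$1$ and mode-$3$ unfoldings of $\tensU_k\times_3\matd_{k+1}^\top$.
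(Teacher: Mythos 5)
Your proposal follows essentially the same route as the paper: parametrize the vertical projection by $\matd_k$, impose horizontality of the residual $\tensV_k-\eta_k$, expand via the unfolding identities, and vectorize to obtain the block system. Your uniqueness argument is a minor and correct variant: where the paper observes directly that $\mata\vecx=0$ forces the associated $\eta$ into $\vertical_\vectensU\!\bartensM_\vecr^*\cap\horizontal_\vectensU\!\bartensM_\vecr^*=\{0\}$ and then invokes the injectivity of the $\matd_k\mapsto\eta_k$ parametrization, you instead pair the block rows with $\matd_k$ to expose the Gram structure $\sum_k\|\eta_k\|_\frob^2$, identify the one-dimensional kernel $(c\matI_{r_1},\dots,c\matI_{r_d})$, and then let the trace row kill it; both arguments hinge on the same injectivity fact and are equally valid, your version simply makes the symmetric positive-semidefiniteness of the block matrix explicit.
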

\begin{proof}
	Since $\proj_{\vertical_\vectensU\!\bartensM_\vecr^*}\!\vectensV\in\vertical_\vectensU\!\bartensM_\vecr^*$, the projection can be represented by $\proj_{\vertical_\vectensU\!\bartensM_\vecr^*}\!\vectensV=(\tensU_1\times_1\matd_1-\tensU_1\times_3\matd_2^\top,\dots,\tensU_d\times_1\matd_d-\tensU_d\times_3\matd_1^\top)$ with $\matd_k\in\mathbb{R}^{r_k\times r_k}$. Additionally, it follows from $\vectensV-\proj_{\vertical_\vectensU\!\bartensM_\vecr^*}\vectensV\in\horizontal_\vectensU\!\bartensM_\vecr^*$ and~\cref{eq: horizontal} that $\tr(\matd_{1})=0$ and
	\begin{equation*}
		\begin{aligned}
			&((\tensV_k)_{(1)}-\matd_k(\tensU_k)_{(1)}+(\tensU_k)_{(1)}(\matd_{k+1}\otimes\matI_{n_k}))(\tensU_k)_{(1)}^\top\\
			&\qquad=(\tensU_{k-1})_{(3)}((\tensV_{k-1})_{(3)}-(\tensU_{k-1})_{(3)}(\matI_{n_{k-1}}\otimes\matd_{k-1}^\top)+\matd_{k}^\top(\tensU_{k-1})_{(3)})^\top,
		\end{aligned}
	\end{equation*}
	By vectorizing of the left hand side, we have 
	\begin{equation*}
		\begin{aligned}
			&~~~~\rmvec(((\tensV_k)_{(1)}-\matd_k(\tensU_k)_{(1)}+(\tensU_k)_{(1)}(\matd_{k+1}\otimes\matI_{n_k}))(\tensU_k)_{(1)}^\top)\\
			&=\rmvec((\tensV_k)_{(1)}(\tensU_k)_{(1)}^\top)-((\tensU_k)_{(1)}(\tensU_k)_{(1)}^\top\otimes\matI_{r_k})\rmvec(\matd_k)\\
            &\qquad\qquad+\rmvec((\tensU_k)_{(1)}(\matd_{k+1}\otimes\matI_{n_k})(\tensU_k)_{(1)}^\top)\\
			&=\rmvec((\tensV_k)_{(1)}(\tensU_k)_{(1)}^\top)-((\tensU_k)_{(1)}(\tensU_k)_{(1)}^\top\otimes\matI_{r_k})\rmvec(\matd_k)\\
            &\qquad\qquad+((\tensU_k)_{(1)}\otimes\matI_{r_k})\rmvec(\tensU_k\times_3\matd_{k+1}^\top)\\
			&=\rmvec((\tensV_k)_{(1)}(\tensU_k)_{(1)}^\top)-((\tensU_k)_{(1)}(\tensU_k)_{(1)}^\top\otimes\matI_{r_k})\rmvec(\matd_k)\\
            &\qquad\qquad+((\tensU_k)_{(1)}\otimes\matI_{r_k})(\matI_{r_{k+1}}\otimes(\tensU_k)_{(3)}^\top)\rmvec(\matd_{k+1}).
		\end{aligned}
	\end{equation*}
	Similarly, it holds for the right hand side that 
	\begin{equation*}
		\begin{aligned}
			&~\rmvec((\tensU_{k-1})_{(3)}((\tensV_{k-1})_{(3)}-(\tensU_{k-1})_{(3)}(\matI_{n_{k-1}}\otimes\matd_{k-1}^\top)+\matd_{k}^\top(\tensU_{k-1})_{(3)})^\top)\\
			=&~\rmvec((\tensU_{k-1})_{(3)}(\tensV_{k-1})_{(3)}^\top)-(\matI_{r_{k-1}}\otimes(\tensU_{k-1})_{(3)})((\tensU_{k-1})_{(1)}^\top\otimes\matI_{r_{k-1}})\rmvec(\matd_{k-1})\\
            &\qquad\qquad+(\matI_{r_k}\otimes(\tensU_{k-1})_{(3)}(\tensU_{k-1})_{(3)}^\top)\rmvec(\matd_k).
		\end{aligned}
	\end{equation*}
	
	Subsequently, we aim to prove that the solution of~\cref{eq: projection} exists and is unique. Denote~\cref{eq: projection} by $\mata\vecx=\vecb$ with $\mata\in\mathbb{R}^{(\sum_{k=1}^d r_k^2+1)\times(\sum_{k=1}^d r_k^2)}$. It suffices to prove $\rank(\mata)=\sum_{k=1}^d r_k^2$. In fact, we observe that if $\mata\vecx=0$ holds for some $\vecx=[\rmvec(\matd_1)^\top\ \rmvec(\matd_2)^\top\ \dots\ \rmvec(\matd_d)^\top]$, it follows from~\cref{eq: projection} and the definition of $\vecb_k$ that $\eta\in\vertical_\vectensU\!\bartensM_\vecr^*$ with $\eta_k=\tensU_k\times_1\matd_k-\tensU_k\times_3\matd_{k+1}^\top$ also belongs to the horizontal space $\horizontal_\vectensU\!\bartensM_\vecr^*$. Therefore, we have $\vecx=0$ and thus $\rank(\mata)=\sum_{k=1}^d r_k^2$. 
\end{proof}

\subsection{Discussion: connection to tensor train decomposition}
We illustrate the connection between the geometries of tensor train decomposition~\cite{oseledets2011tensor} and the tensor ring decomposition. Specifically, the tensor train decomposition decomposes a tensor $\tensX$ into $d$ core tensors $\tensU_k\in\mathbb{R}^{r_k\times n_k\times r_{k+1}}$ for $k\in[d]$ and $r_1=r_{d+1}=1$, where the $(i_1,i_2,\dots,i_d)$-th entry of $\tensX$ is represented by the product of $d$ matrices
\[\tensX(i_1,i_2,\dots,i_d)=\matU_1(i_1)\matU_2(i_2)\cdots\matU_d(i_d),\]
see~\cref{fig: Tensor Ring} (middle). Note that we adopt the same notation as TR decomposition since TR decomposition can be viewed as a generalization of TT. We refer to the tuple $(1,r_1,r_2,\dots,r_{d-1},1)$ as the TT rank $\ranktt(\tensX)$ of $\tensX$.

\paragraph{Higher compressibility of TR} Given an injective TR tensor $\tensX=\llbracket\tensU_1,\dots,\tensU_d\rrbracket$ with $\vectensU\in\bartensM_\vecr^*$, we observe that 
\begin{equation*}
	\begin{aligned}
		\tensX(i_1,\dots,i_d)&=\tr(\prod_{j=1}^{d}\matu_j(i_j))=\rmvec(\matu_1(i_1)^\top)^\top\rmvec((\prod_{j=2}^{d-1}\matu_j(i_j))\matu_d(i_d))\\
		&=\rmvec(\matu_1(i_1)^\top)^\top(\matI_{r_1}\otimes(\prod_{j=2}^{d-1}\matu_j(i_j)))\rmvec(\matu_d(i_d))\\
		&=\rmvec(\matu_1(i_1)^\top)^\top\prod_{j=2}^{d-1}(\matI_{r_1}\otimes\matu_j(i_j))\rmvec(\matu_d(i_d))
	\end{aligned}
\end{equation*}
for $i_k\in[n_k]$, $k\in[d]$. Therefore, it follows from~\cref{lem: rank} that the TT-rank of $\tensX$ is $\ranktt(\tensX)=(1,r_1r_2,r_1r_3,\dots,r_1r_{d},1)$. Consequently, tensors in TT and TR formats have the following relationship
\[\tau(\bartensM_\vecr^*)\subseteq\{\tensX:\ranktt(\tensX)=(1,r_1r_2,r_1r_3,\dots,r_1r_{d},1)\},\]
where $\tau$ recovers a tensor from core tensors in $\bartensM_\vecr^*$ via TR decomposition. Note that if we store $\tensX$ in TT format, it requires ${\cal O}(ndr^4)$ parameters with $n=\max\{n_1,\!n_2,\dots,\!n_d\}$ and $r=\max\{r_1,r_2,\dots,r_d\}$. Nevertheless, storing $\tensX$ in TR format requires ${\cal O}(ndr^2)$ parameters. Therefore, TR decomposition is able to enjoy higher compressibility and flexibility.

\paragraph{Different quotient gemetries}
We briefly introduce the quotient geometry of tensor train decomposition; see~\cite{cai2026tensor} for details. Specifically, given $\vecr=(1,r_2,r_3,\dots,r_d,1)$, the quotient manifold $\bartensM_{\vecr}^\mathrm{TT}/\GL(\vecr)$ of TT is developed from the total space $\bartensM_{\vecr}^\mathrm{TT}:=\{(\tensU_1,\dots,\tensU_d)\!:\tensU_k\in\mathbb{R}^{r_k\times n_k\times r_{k+1}},r_1=r_{d+1}=1,\rank((\tensU_k)_{(1)}) = \rank((\tensU_{k-1})_{(3)}) = r_k\}$, Lie group $\GL(\vecr)$ and group action in a similar fashion as TR by breaking the edge between $\tensU_d$ and $\tensU_1$ in~\cref{fig:gauge_inv}.

Recall that~\cref{tab:TT_TR_quotient} summarizes the quotient geometries of the TT and TR decompositions. There are several essential distinctions: 
\begin{itemize}
    \item Rank conditions. For TT, the rank conditions are imposed on the {mode-1} and the mode-3 unfoldings of core tensors $\tensU_k$ in $\bartensM_{\vecr}^\mathrm{TT}$. For TR, the injectivity is encoded through the full-rank condition on the mode-2 unfolding of each core $\tensU_k$, which is more strict since $\rank((\tensU_k)_{(2)})=r_{k}r_{k+1}$ indicates that $\rank((\tensU_k)_{(1)}) = \rank((\tensU_{k-1})_{(3)}) = r_k$. The full-rank condition for TR is necessary since the group action $\theta$ can be not free even though $\tensU_k$ satisfies the rank condition for TT (e.g., consider $\tensU_k$ defined by $\matu_k(i_k)=\matI_r$ with $r=r_1=r_2=\cdots=r_d$). 
    \item The Lie groups. The Lie group for TT is the product of general linear groups $\GL(\vecr)$. The Lie group for TR is the projective general linear group $\PGL(\vecr)$, since all cores are connected through a single closed loop. 
    \item Quotient manifold construction. The quotient manifold structure for TT follows from the results in~\cite{uschmajew2013geometry,haegeman2014geometry}. However, establishing the quotient manifold structure of TR is significantly more challenging due to the ring structure, where the properness depends on the smallest singular value of mode-2 unfolding of core tensors; see~\cref{thm:TRquotient}.
\end{itemize}

\subsection{Geometry of uniform tensor ring decomposition}
Following from the same spirit, we provide the geometric tools of the uniform tensor ring decomposition (uTR, or translation-invariant MPS in physics), where all the core tensors are identical, i.e., $\tensU=\tensU_1=\tensU_2=\cdots=\tensU_d$. It is easy to see that the uTR decomposition is not unique due to the gauge invariance, as illustrated in~\cref{fig:gauge_inv_uMPS}. However, the translation-invariant structure further restricts the invertible matrices to be identical across all modes, namely, $\mata=\mata_1=\mata_2=\cdots=\mata_d$ in~\cref{fig:gauge_inv}. 

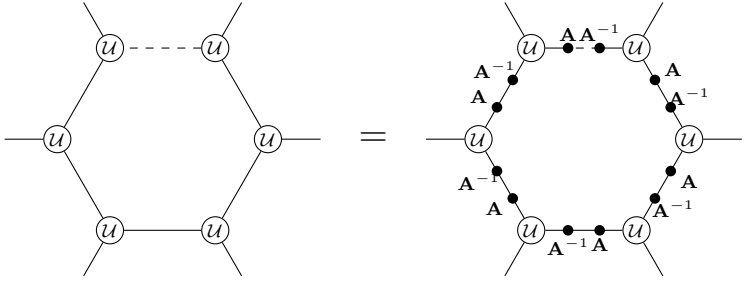
\begin{figure}[htbp]
    \centering
    \begin{tikzpicture}\scriptsize
        \coordinate (O) at (0,0); 
        \def\l{1.4};


		\node[circle,draw,inner sep=1pt] (U1) at (O) {$\tensU$};
		\node[circle,draw,inner sep=1pt] (U2) at ($(O)+(0.5*\l,-0.866*\l)$) {$\tensU$};
		\node[circle,draw,inner sep=1pt] (U3) at ($(O)+(1.5*\l,-0.866*\l)$) {$\tensU$};
		\node[circle,draw,inner sep=1pt] (U4) at ($(O)+(2*\l,0)$) {$\tensU$};
		\node[circle,draw,inner sep=1pt] (U5) at ($(O)+(1.5*\l,0.866*\l)$) {$\tensU$};
		\node[circle,draw,inner sep=1pt] (U6) at ($(O)+(0.5*\l,0.866*\l)$) {$\tensU$};
        
        \draw[-] (U6)--(U1)--(U2)--(U3)--(U4)-- (U5);
        \draw[dashed] (U5)--(U6);
        \draw (U1) -- ($(U1)+0.5*(-\l,0)$);
        \draw (U2) -- ($(U2)+0.5*(-0.5*\l,-0.866*\l)$);
        \draw (U3) -- ($(U3)+0.5*(0.5*\l,-0.866*\l)$);
        \draw (U4) -- ($(U4)+0.5*(\l,0)$);
        \draw (U5) -- ($(U5)+0.5*(0.5*\l,0.866*\l)$);
        \draw (U6) -- ($(U6)+0.5*(-0.5*\l,0.866*\l)$);

        \node at ($(3*\l,0)$) {\Large$=$};

        \coordinate (O) at ($(4*\l,0)$);
        \node[circle,draw,inner sep=1pt] (U1) at (O) {$\tensU$};
		\node[circle,draw,inner sep=1pt] (U2) at ($(O)+(0.5*\l,-0.866*\l)$) {$\tensU$};
		\node[circle,draw,inner sep=1pt] (U3) at ($(O)+(1.5*\l,-0.866*\l)$) {$\tensU$};
		\node[circle,draw,inner sep=1pt] (U4) at ($(O)+(2*\l,0)$) {$\tensU$};
		\node[circle,draw,inner sep=1pt] (U5) at ($(O)+(1.5*\l,0.866*\l)$) {$\tensU$};
		\node[circle,draw,inner sep=1pt] (U6) at ($(O)+(0.5*\l,0.866*\l)$) {$\tensU$};
        
        \coordinate (A1) at ($0.35*(U6)+0.65*(U1)$);
        \coordinate (A2) at ($0.35*(U1)+0.65*(U2)$);
        \coordinate (A3) at ($0.35*(U2)+0.65*(U3)$);
        \coordinate (A4) at ($0.35*(U3)+0.65*(U4)$);
        \coordinate (A5) at ($0.35*(U4)+0.65*(U5)$);
        \coordinate (A6) at ($0.35*(U5)+0.65*(U6)$);
        
        \coordinate (A1inv) at ($0.35*(U1)+0.65*(U6)$);
        \coordinate (A2inv) at ($0.35*(U2)+0.65*(U1)$);
        \coordinate (A3inv) at ($0.35*(U3)+0.65*(U2)$);
        \coordinate (A4inv) at ($0.35*(U4)+0.65*(U3)$);
        \coordinate (A5inv) at ($0.35*(U5)+0.65*(U4)$);
        \coordinate (A6inv) at ($0.35*(U6)+0.65*(U5)$);
        
        \fill (A1) circle (2pt);
        \fill (A2) circle (2pt);
        \fill (A3) circle (2pt);
        \fill (A4) circle (2pt);
        \fill (A5) circle (2pt);
        \fill (A6) circle (2pt);

        \fill (A1inv) circle (2pt);
        \fill (A2inv) circle (2pt);
        \fill (A3inv) circle (2pt);
        \fill (A4inv) circle (2pt);
        \fill (A5inv) circle (2pt);
        \fill (A6inv) circle (2pt);
        
        
        \draw (U1) -- ($(U1)+0.5*(-\l,0)$);
        \draw (U2) -- ($(U2)+0.5*(-0.5*\l,-0.866*\l)$);
        \draw (U3) -- ($(U3)+0.5*(0.5*\l,-0.866*\l)$);
        \draw (U4) -- ($(U4)+0.5*(\l,0)$);
        \draw (U5) -- ($(U5)+0.5*(0.5*\l,0.866*\l)$);
        \draw (U6) -- ($(U6)+0.5*(-0.5*\l,0.866*\l)$);

        \draw[-] (A6)--(U6)--(U1)--(U2)--(U3)--(U4)-- (U5)--(A6inv);
        \draw[dashed] (A6inv)--(A6);

        \node at ($(A1)+0.2*(-0.866*\l,0.5*\l)$) {$\mata$};
        \node at ($(A1inv)+0.2*(-0.866*\l,0.5*\l)$) {$\mata^{-1}$};
        
        \node at ($(A2)+0.2*(-0.866*\l,-0.5*\l)$) {$\mata$};
        \node at ($(A2inv)+0.2*(-0.866*\l,-0.5*\l)$) {$\mata^{-1}$};
        
        \node[below] at ($(A3)$) {$\mata$};
        \node[below] at ($(A3inv)$) {$\mata^{-1}$};
        
        \node at ($(A4)+0.2*(0.866*\l,-0.5*\l)$) {$\mata$};
        \node at ($(A4inv)+0.2*(0.866*\l,-0.5*\l)$) {$\mata^{-1}$};
        
        \node at ($(A5)+0.2*(0.866*\l,0.5*\l)$) {$\mata$};
        \node at ($(A5inv)+0.2*(0.866*\l,0.5*\l)$) {$\mata^{-1}$};
        
        \node[above] at ($(A6)$) {$\mata$};
        \node[above] at ($(A6inv)$) {$\mata^{-1}$};
        
    \end{tikzpicture}
    \caption{Gauge invariance of uniform tensor ring decomposition.}
    \label{fig:gauge_inv_uMPS}
\end{figure}

We denote the parameter space and quotient set of the injective uniform tensor ring decomposition by
\[\bartensM_r^*:=\mathbb{R}^{r\times n\times r}_*\quad\text{and}\quad\tensM_r^*:=\bartensM_r^*/\PGL(r),\]
where $\PGL(r)=\GL(r)/\mathbb{R}_*$. Note that a single integer $r$ is considered rather than an array. 
\begin{definition}[injective uTR tensor]
    A tensor $\tensX=\llbracket\tensU,\tensU,\dots,\tensU\rrbracket$ with $\tensU\in\bartensM_r^*$ is referred to as an injective uTR tensor.
\end{definition}

In fact, $\bartensM_r^*$ is also a quotient manifold. A proof can be found in~\cite[Theorem 20]{haegeman2014geometry}. Nevertheless, we can provide a different proof through~\cref{thm:TRquotient}.
\begin{theorem}
    The quotient set $\bartensM_r^*$ is a quotient manifold with dimension 
    \[\dim(\bartensM_r^*)=rn^2-r^2+1.\]
\end{theorem}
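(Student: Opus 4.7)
The plan is to adapt the proof of~\cref{thm:TRquotient} to the single-core setting, verifying that the $\PGL(r)$ action
\[\theta_{[\mata]}(\tensU):=\tensU\times_1\mata\times_3\mata^{-\top}\]
on the smooth open subset $\bartensM_r^*\subset\mathbb{R}^{r\times n\times r}$ is free and proper, and then invoking~\cite[Theorem~9.18]{boumal2023intromanifolds}. I will view the uniform TR case as the diagonal restriction of~\cref{thm:TRquotient} with $\vecr=(r,r,\dots,r)$ and all $n_k=n$, so that the algebraic identities used there (mode-2 unfolding, Moore--Penrose inverse formulas, Kronecker structure) can be reused verbatim while dropping the index $k$.

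For freeness, I would essentially copy~\cref{prop: free}: if $\theta_{\mata}(\tensU)=\tensU$, taking mode-2 unfolding yields $(\tensU)_{(2)}(\mata^{-1}\otimes\mata^{\top})=(\tensU)_{(2)}$, and the full-rank hypothesis $\rank((\tensU)_{(2)})=r^2$ forces $\mata^{-1}\otimes\mata^{\top}=\matI_{r^2}$, which by the rank-one structure of the Kronecker product gives $\mata=c\matI_r$ for some $c\in\mathbb{R}_*$, i.e., $[\mata]$ is the identity in $\PGL(r)$. For properness I follow the two-step scheme of~\cref{prop: proper}: given $\tensU^{(t)}\to\tensU^*$ in $\bartensM_r^*$ and $[\mata^{(t)}]\in\PGL(r)$ with $\tensV^{(t)}:=\theta_{\mata^{(t)}}(\tensU^{(t)})\to\tensV^*\in\bartensM_r^*$, normalize representatives by $\|\mata^{(t)}\|_\frob=1$, use $(\mata^{(t)})^{-1}\otimes(\mata^{(t)})^\top=(\tensU^{(t)})_{(2)}^\dagger(\tensV^{(t)})_{(2)}$ to obtain the analogue of~\cref{eq: ratio of singular values}, which now gives an immediate uniform bound on $\sigma_{\max}(\mata^{(t)})/\sigma_{\min}(\mata^{(t)})$ without the cyclic recursion needed in~\cref{prop: proper}. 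Combined with $\|\mata^{(t)}\|_\frob=1$, this yields $\sigma_{\max}(\mata^{(t)})$ bounded above and $\sigma_{\min}(\mata^{(t)})$ bounded below away from zero, so a subsequence of $\{\mata^{(t)}\}$ converges to some $\mata^*\in\GL(r)$; uniqueness of the limit in $\PGL(r)$ follows exactly as in the second half of~\cref{prop: proper} (showing the auxiliary sequence $\tilde\mata^{(t)}=\mata^{(t)}(\mata^*)^{-1}$ tends to a scalar multiple of the identity).

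Once freeness and properness are established, the quotient manifold theorem gives that $\tensM_r^*=\bartensM_r^*/\PGL(r)$ is a smooth quotient manifold whose dimension equals $\dim(\bartensM_r^*)-\dim(\PGL(r))$; since $\bartensM_r^*$ is an open subset of $\mathbb{R}^{r\times n\times r}$ and $\dim(\PGL(r))=r^2-1$, the dimension count gives the stated formula. (I would also flag the minor typo in the statement: the object claimed to be a quotient manifold is $\tensM_r^*$ rather than the total space $\bartensM_r^*$, and the natural dimension expression from this count is $r^2n-r^2+1$.)

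I do not expect any serious obstacle here. The main technical step---the singular-value control that was delicate in~\cref{prop: proper} because the gauge ambiguity globally couples all $d$ matrices through the ring---collapses to a one-line bound in the uniform setting, since we only have a single matrix $\mata^{(t)}$. The only subtlety worth double-checking is that the uniform full-rank condition $\rank((\tensU)_{(2)})=r^2$ (rather than the separately imposed rank conditions on $(\tensU)_{(1)}$ and $(\tensU)_{(3)}$) is strong enough both to invoke the fundamental~\cref{thm: fundamental} at a single-core level and to guarantee the Moore--Penrose identity used in properness; this follows because $\rank((\tensU)_{(2)})=r^2$ already implies $\rank((\tensU)_{(1)})=\rank((\tensU)_{(3)})=r$, as observed in the paragraph after the definition of injective TR tensors.
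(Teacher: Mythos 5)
Your proposal is correct and matches the paper's intended route. The paper does not actually write out a proof for this theorem: it only cites~\cite[Theorem 20]{haegeman2014geometry} and remarks that a ``different proof'' can be obtained ``through~\cref{thm:TRquotient},'' and your direct verification that the $\PGL(r)$-action on $\bartensM_r^*$ is free and proper (following the structure of~\cref{prop: free} and~\cref{prop: proper}) followed by an appeal to~\cite[Theorem~9.18]{boumal2023intromanifolds} is exactly the natural way to supply that argument. Your auxiliary observations are also all correct: the properness step collapses to a single singular-value bound because there is only one gauge matrix $\mata^{(t)}$ and no cyclic recursion around the ring is needed; the object claimed to be a quotient manifold should be $\tensM_r^*=\bartensM_r^*/\PGL(r)$ rather than the total space $\bartensM_r^*$; and the dimension should read $r^2n-r^2+1$ rather than $rn^2-r^2+1$, since $\bartensM_r^*$ is open in $\mathbb{R}^{r\times n\times r}$ and $\dim\PGL(r)=r^2-1$. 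One small remark: your opening ``diagonal restriction of~\cref{thm:TRquotient}'' framing can in fact be made rigorous (the diagonal copy of $\bartensM_r^*$ is a closed $\PGL(r)$-invariant submanifold of $\bartensM_\vecr^*$ with $\vecr=(r,\dots,r)$, and restricting a free, proper action to a closed subgroup and closed invariant submanifold preserves both properties), but you sensibly do not rely on it and give the self-contained verification instead.
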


By using~\cref{thm:vertical_and_horizontal}, we provide the vertical and horizontal spaces and projections. It is worth noting that the computation cost of the projections in~\cref{eq:v_h_uTR} is independent from the order $d$, since the core tensors in uTR are identical, where the cost of the TR-related projections in~\cref{eq:v_h_TR} scales with $d$.

\begin{proposition}
	Given $\tensU\in\bartensM_r^*$, the vertical and horizontal spaces are given by
	\begin{equation}
		\begin{aligned}
			\vertical_\tensU\!\bartensM_r^*&=\{\tensU\times_1\matD-\tensU\times_3\matD^\top,\ \tr(\matD)=0\},\\
			\horizontal_\tensU\!\bartensM_r^*&=\{\xi\in\mathbb{R}^{r\times n\times r}:\xi_{(1)}\matu_{(1)}^\top=\matu_{(3)}\xi_{(3)}^{\top}\}.
		\end{aligned}
	\end{equation}
	Furthermore, the orthogonal projection of a tensor $\tensV\in\mathbb{R}^{r\times n\times r}$ onto the vertical and horizontal spaces are given by
	\begin{equation}\label{eq:v_h_uTR}
		\begin{aligned}
			\proj_{\vertical_\tensU\!\bartensM_r^*}&=\tensU\times_1\matD-\tensU\times_3\matD^\top\\
			\proj_{\horizontal_\tensU\!\bartensM_r^*}&=\tensV-(\tensU\times_1\matD-\tensU\times_3\matD^\top),
		\end{aligned}
	\end{equation}
	where $\matD\in\mathbb{R}^{r\times r}$ is determined by the matrix equations
	\begin{equation*}
		\begin{aligned}
			(\matv_{(1)}-\matd\matu_{(1)}+\matu_{(1)}(\matd\otimes\matI_{n}))\matu_{(1)}^\top&=\matu_{(3)}(\matv_{(3)}-\matu_{(3)}(\matI_{n}\otimes\matd^\top)+\matd^\top\matu_{(3)})^\top,\\
			\tr(\matd)&=0.
		\end{aligned}
	\end{equation*}
\end{proposition}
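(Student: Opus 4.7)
The plan is to mirror the arguments for Propositions~\ref{thm:vertical_and_horizontal} and~\ref{eq:v_h_TR}, specialized to the uniform setting. The crucial simplification is that the group $\PGL(r)$ acts diagonally by a single matrix $\matA$ (shared across all modes) modulo scalars, so the vertical space is parametrized by one matrix $\matD$ rather than a $d$-tuple. In particular, the condition $\sum_{k=1}^d \tr(\matD_k) = 0$ arising from the $\mathbb{R}_*$ quotient in the general TR case becomes simply $\tr(\matD) = 0$ here.

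To characterize the vertical space, I would introduce the smooth curve
\[\gamma(t) = \tensU \times_1 (\matI_r + t\matD) \times_3 (\matI_r + t\matD)^{-\top}\]
in the orbit $\tensF_\tensU$ and differentiate at $t=0$ to obtain $\dot\gamma(0) = \tensU\times_1\matD - \tensU\times_3\matD^\top$, giving one inclusion. For the reverse inclusion I would check dimensions: since $\dim\bartensM_r^* = rn^2 - r^2 + 1$ and the orbit has dimension $\dim\PGL(r) = r^2 - 1$, the set on the right has exactly the correct dimension provided the parametrization $\matD \mapsto \tensU\times_1\matD - \tensU\times_3\matD^\top$ has kernel $\{c\matI_r : c \in \mathbb{R}\}$. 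This kernel computation follows from the identity $\matu_{(2)}(\matI_r\otimes\matD - \matD^\top\otimes\matI_r)^\top = 0$ combined with $\rank(\matu_{(2)}) = r^2$ (since $\tensU \in \bartensM_r^*$), forcing $\matI_r \otimes \matD = \matD^\top \otimes \matI_r$, which implies $\matD$ is a scalar matrix; the constraint $\tr(\matD) = 0$ then kills these scalar multiples.

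For the horizontal space, I would use the Euclidean metric and compute, for any $\xi \in \mathbb{R}^{r\times n\times r}$ and any admissible $\matD$,
\[\langle\xi, \tensU\times_1\matD - \tensU\times_3\matD^\top\rangle = \langle\xi_{(1)}\matu_{(1)}^\top - \matu_{(3)}\xi_{(3)}^\top, \matD\rangle.\]
Requiring this to vanish for all $\matD$ with $\tr(\matD)=0$ forces $\xi_{(1)}\matu_{(1)}^\top - \matu_{(3)}\xi_{(3)}^\top = c\matI_r$ for some $c \in \mathbb{R}$. Taking the trace of both sides gives $\langle\xi,\tensU\rangle - \langle\xi,\tensU\rangle = rc$, so $c = 0$, yielding the stated characterization.

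The projection formulas then follow by writing $\proj_{\vertical_\tensU\!\bartensM_r^*}\tensV = \tensU\times_1\matD - \tensU\times_3\matD^\top$ for some $\matD$ with $\tr(\matD) = 0$, and imposing $\tensV - \proj_{\vertical_\tensU\!\bartensM_r^*}\tensV \in \horizontal_\tensU\!\bartensM_r^*$; this produces the matrix equation stated in the proposition. Existence and uniqueness of $\matD$ parallel the argument at the end of the proof of Proposition~\ref{eq:v_h_TR}: any $\matD$ in the kernel of the linear system yields an element $\eta = \tensU\times_1\matD - \tensU\times_3\matD^\top$ lying in $\vertical_\tensU\!\bartensM_r^* \cap \horizontal_\tensU\!\bartensM_r^* = \{0\}$, so $\matD$ is a scalar matrix, and $\tr(\matD) = 0$ forces $\matD = 0$. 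The main obstacle is verifying this injectivity with the single trace constraint replacing the more delicate ring-coupled constraint of the general case; once that is in hand, the rest is bookkeeping that reduces directly to the $d=1$-style specialization of Proposition~\ref{eq:v_h_TR}.
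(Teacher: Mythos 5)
Your proposal is correct and takes essentially the same route the paper intends: the paper gives no detailed proof for the uniform case and simply states that the result follows from Proposition~\ref{thm:vertical_and_horizontal}, which is precisely the specialization you carry out. Two small slips worth noting, neither of which affects correctness: the paper's constraint in the general TR case is $\tr(\matD_1)=0$ rather than $\sum_k\tr(\matD_k)=0$ (both are valid normalizations of the one-dimensional kernel, but your description misstates the paper's choice); and your aside ``$\dim\bartensM_r^*=rn^2-r^2+1$'' confuses the total space with the quotient (the total space $\bartensM_r^*$ is an open subset of $\mathbb{R}^{r\times n\times r}$ and thus has dimension $nr^2$; it is $\tensM_r^*$ that has dimension $nr^2-r^2+1$), though your actual dimension argument for the vertical space relies only on the kernel computation and the orbit dimension, so this does not create a gap.
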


\section{Geometric methods on the quotient manifolds}\label{sec:geometric_methods}
Based on the developed geometries, we provide geometric methods for minimizing a smooth function on the quotient manifolds.

\subsection{Riemannian optimization on tensor ring}
We consider the following optimization on tensors in the tensor ring format
\begin{equation}
	\label{eq:opt_on_Mr}
	\min_{\vectensU\in\bartensM_\vecr^*}\ \barf(\vectensU)=\phi(\tau(\vectensU)),
\end{equation}
where $\phi:\mathbb{R}^{n_1\times n_2\times\cdots\times n_d}\to\mathbb{R}$ is a smooth function, $\tau(\vectensU)=\llbracket\tensU_1,\tensU_2,\dots,\tensU_d\rrbracket$ recovers a tensor via tensor ring decomposition from parameters $\tensU_1,\tensU_2,\dots,\tensU_d$. We observe that $\barf$ admits the invariance $\barf(\vectensU)=\barf(\vectensV)$ for all $\vectensU\sim\vectensV$. Therefore, $f:=\barf\circ\pi^{-1}$ is well-defined on the quotient manifold $\tensM_\vecr^*=\bartensM_\vecr^*/\PGL(\vecr)$ and one can consider solving~\cref{eq:opt_on_Mr} by Riemannian optimization methods on $\tensM_\vecr^*$; see~\cref{fig: comm diagram func}.
\begin{figure}[htbp]
	\centering
	\begin{tikzpicture}
		\def\x{1.6};

		\node (Mbar) at (0,\x) {$\bartensM_\vecr^*$};
		\node (M) at (0,0) {$\tensM_\vecr^*$};
		\node (N) at (2*\x,0) {$\mathbb{R}$};
		
		\draw[->] (Mbar) -- (M);
		\draw[->] (M) -- (N);
		\draw[->] (Mbar) -- (N);
		
		\node[left] at ($0.5*(Mbar)+0.5*(M)$) {$\pi$};
		\node[below] at ($0.5*(M)+0.5*(N)$) {$f=\barf\circ\pi^{-1}$};
		\node[above] at ($0.5*(Mbar)+0.5*(N)$) {$\barf$};
	\end{tikzpicture}
	\caption{Commutative diagram of smooth functions on the total space $\bartensM_\vecr^*$ and quotient manifold $\tensM_\vecr^*$.}
	\label{fig: comm diagram func}
\end{figure}

Specifically, the Riemannian gradient descent (TR-RGD(Q)) method updates a point  $[\vectensU^{(t)}]\in\bartensM_\vecr^*/\PGL(\vecr)$ by 
\[[\vectensU^{(t+1)}]=\retr_{[\vectensU^{(t)}]}(-s^{(t)}\grad f([\vectensU^{(t)}])).\]
Recall that the retraction $\retr$ on $\bartensM_\vecr^*/\PGL(\vecr)$ is defined by $\retr_{[\vectensU]}(\xi)=[\bar{\retr}_\vectensU(\lift_\vectensU(\xi))]$ for $\xi\in\tangent_{[\vectensU]}\!\tensM_\vecr^*$ and the Riemannian gradient of $f$ at $[\vectensU]$ satisfies $\lift_\vectensU(\grad f([\vectensU]))=\grad\barf(\vectensU)$ and 
$\grad\barf(\vectensU)\in\tangent_\vectensU\!\bartensM_\vecr^*$ such that $\mathrm{D}\grad\barf(\vectensU)[\vec\eta]=\langle\nabla\barf(\vectensU),\vec\eta\rangle$ for all $\vec\eta\in\tangent_\vectensU\!\bartensM_\vecr^*$, where $s^{(t)}>0$ and $\bar{\retr}$ is a retraction on $\bartensM_\vecr^*$. Therefore, the update rule of TR-RGD(Q) is equivalent to
\[[\vectensU^{(t+1)}]=\retr_{[\vectensU^{(t)}]}(-s^{(t)}\grad f([\vectensU^{(t)}]))=[\bar{\retr}_{\vectensU^{(t)}}(-s^{(t)}\grad \barf(\vectensU^{(t)}))],\]
which is implemented on the total space $\bartensM_\vecr^*$. Moreover, for minimization of $\barf$ on $\bartensM_\vecr^*$, the Riemannian gradient descent method (TR-RGD(E)) updates a point $\vectensU^{(t)}\in\bartensM_\vecr^*$ by $\vectensU^{(t+1)}=\bar{\retr}_{\vectensU^{(t)}}(-s^{(t)}\grad \barf(\vectensU^{(t)}))$. Therefore, the RGD method to minimize $\barf$ on $\bartensM_\vecr^*$ is numerically equivalent to the RGD method to minimize $f$ on the quotient manifold $\bartensM_\vecr^*/\PGL(\vecr)$.  We refer to~\cite[\S 9]{boumal2023intromanifolds} for more details. The TR-RGD(Q) method is summarized in Algorithm~\ref{alg: RGD(Q)}.

The Riemannian conjugate gradient (TR-RCG(Q)) method is given in Algorithm~\ref{alg: RCG(Q)}. The vector transport is chosen as the projection onto the horizontal space $\proj_{\horizontal_{\vectensU^{(t)}}\!\bartensM_\vecr^*}$ and the retraction is chosen as the identity map. Since $\bartensM_\vecr^*$ is not compact, the sequence generated by TR-RGD(Q) can be unbounded. One may adopt a regularization term $\lambda\|\vectensU\|_\frob^2/2$ with $\lambda>0$ to ensure the coercivity of the cost function (see, e.g., \cite{gao2024riemannian}) and thus TR-RGD(Q) and TR-RCG(Q) methods enjoy the global convergence. 

\begin{algorithm}[htbp]
	\caption{Riemannian gradient descend method on $\tensM_\vecr^*$ (TR-RGD(Q))}\label{alg: RGD(Q)}
	\begin{algorithmic}[1]
		\REQUIRE Manifold $\bartensM_\vecr^*$, smooth function $\barf$, initial guess $\vectensU^{(0)}\in\bartensM_\vecr^*$, $t=0$. 
		\WHILE{the stopping criteria are not satisfied}
		\STATE Compute $\eta^{(t)}=-\grad\barf(\vectensU^{(t)})$.
		\STATE Select a stepsize $s^{(t)}$.
		\STATE Update $\vectensU^{(t+1)}=\bar{\retr}_{\vectensU^{(t)}}(s^{(t)}\eta^{(t)})$; $t=t+1$.
		\ENDWHILE
		\ENSURE $\vectensU^{(t)}\in\bartensM_\vecr^*$. 
	\end{algorithmic}
\end{algorithm}

\begin{algorithm}[htbp]
	\caption{Riemannian conjugate gradient method on $\tensM_\vecr^*$ (TR-RCG(Q))}\label{alg: RCG(Q)}
	\begin{algorithmic}[1]
		\REQUIRE Manifold $\bartensM_\vecr^*$, smooth function $\barf$, initial guess $\vectensU^{(0)}\in\bartensM_\vecr^*$, $t=0$  $\beta^{(0)}=0$. 
		\WHILE{the stopping criteria are not satisfied}
		\STATE Compute $\eta^{(t)}=- \grad\barf(\vectensU^{(t)})+\beta^{(t)}\proj_{\horizontal_{\vectensU^{(t)}}}\eta^{(t-1)}$ with CG parameter $\beta^{(t)}$.
		\STATE Select a stepsize $s^{(t)}$.
		\STATE Update $\vectensU^{(t+1)}=\bar{\retr}_{\vectensU^{(t)}}(s^{(t)}\eta^{(t)})$; $t=t+1$.
		\ENDWHILE
		\ENSURE $\vectensU^{(t)}\in\bartensM_\vecr^*$. 
	\end{algorithmic}
\end{algorithm}

\subsection{Riemannian optimization on uniform tensor ring}
Similarly, we consider the following optimization on tensors in the uniform tensor ring format
\begin{equation}
	\label{eq:opt_on_uTR}
	\min_{\tensU\in\bartensM_r^*}\ \barf(\tensU)=\phi(\tau(\tensU)),
\end{equation}
where $\phi:\mathbb{R}^{n_1\times n_2\times\cdots\times n_d}\to\mathbb{R}$ is a smooth function, $\tau(\tensU)=\llbracket\tensU,\tensU,\dots,\tensU\rrbracket$ recovers a tensor via the uniform tensor ring decomposition from $\tensU$. Since $\barf$ admits the invariance $\barf(\tensU)=\barf(\tensV)$ for all $\tensU\sim\tensV$, $f:=\barf\circ\pi^{-1}$ is also well-defined on the quotient manifold $\tensM_r^*=\bartensM_r^*/\PGL(r)$ and one can consider solving~\cref{eq:opt_on_uTR} by Riemannian optimization methods on $\tensM_r^*$. The Riemannian gradient descent (uTR-RGD(Q)) and the Riemannian conjugate gradient (uTR-RCG(Q)) methods are listed in~\cref{alg: uTR-RGD(Q),alg: uTR-RCG(Q)}.

\begin{algorithm}[htbp]
	\caption{Riemannian gradient descend method on $\bartensM_r^*$ (uTR-RGD(Q))}\label{alg: uTR-RGD(Q)}
	\begin{algorithmic}[1]
		\REQUIRE Manifold $\bartensM_r^*$, smooth function $\barf$, initial guess $\tensU^{(0)}\in\bartensM_r^*$, $t=0$. 
		\WHILE{the stopping criteria are not satisfied}
		\STATE Compute $\eta^{(t)}=-\grad\barf(\tensU^{(t)})$.
		\STATE Select a stepsize $s^{(t)}$.
		\STATE Update $\tensU^{(t+1)}=\bar{\retr}_{\tensU^{(t)}}(s^{(t)}\eta^{(t)})$; $t=t+1$.
		\ENDWHILE
		\ENSURE $\tensU^{(t)}\in\bartensM_r^*$. 
	\end{algorithmic}
\end{algorithm}
\begin{algorithm}[htbp]
	\caption{Riemannian conjugate gradient method on $\bartensM_r^*$ (uTR-RCG(Q))}\label{alg: uTR-RCG(Q)}
	\begin{algorithmic}[1]
		\REQUIRE Manifold $\bartensM_r^*$, smooth function $\barf$, initial guess $\vectensU^{(0)}\in\bartensM_r^*$, $t=0$  $\beta^{(0)}=0$. 
		\WHILE{the stopping criteria are not satisfied}
		\STATE Compute $\eta^{(t)}=- \grad\barf(\tensU^{(t)})+\beta^{(t)}\proj_{\horizontal_{\tensU^{(t)}}}\eta^{(t-1)}$ with CG parameter $\beta^{(t)}$.
		\STATE Select a stepsize $s^{(t)}$.
		\STATE Update $\tensU^{(t+1)}=\bar{\retr}_{\tensU^{(t)}}(s^{(t)}\eta^{(t)})$; $t=t+1$.
		\ENDWHILE
		\ENSURE $\vectensU^{(t)}\in\bartensM_r^*$. 
	\end{algorithmic}
\end{algorithm}

\subsection{Discussion: different Riemannian metric}
Since different metrics result in different Riemannian gradients and thus distinct Riemannian methods, one may consider choosing an appropriate Riemannian metric $\barg$ to accelerate the Riemannian methods~\cite{gao2025optimization}. For instance, a \emph{preconditioned} metric on the parameter space $\bartensM_\vecr$ of tensor ring decomposition can be proposed~\cite{gao2024riemannian} by incorporating second-order information of the objective function of low-rank tensor ring completion,
\[\barg_{\vectensU}(\vec\xi,\vec\eta)=\sum_{k=1}^d\langle(\xi_k)_{(2)},(\eta_k)_{(2)}(\matW_{\neq k}^\top\matW_{\neq k}^{}+\delta\matI_{r_kr_{k+1}})\rangle\]
for $\vec\xi,\vec\eta\in\tangent_\vectensU\!\bartensM_\vecr\simeq\bartensM_\vecr$, where $\delta>0$ is introduced to ensure the positive definiteness. It is worth noting that for $[\vectensU]\in\tensM_\vecr^*$, the matrix $\matW_{\neq k}^\top\matW_{\neq k}^{}$ is naturally positive definite from~\cref{lem: rank} and thus the regularization term $\delta\matI_{r_k r_{k+1}}$ is not compelling.

\section{Numerical validation}\label{sec: numerical}
In this section, we numerically validate the geometric tools via the (uniform) tensor ring completion tasks. Given a partially observed $d$-th order tensor $\tensA\in\mathbb{R}^{n_1\times\cdots\times n_d}$ on an index set $\Omega\subseteq[n_1]\times\cdots\times[n_d]$, tensor completion aims to recover the full tensor $\tensA$ from entries on $\Omega$. We formulate the tensor completion task on the parameter space $\tensM_\vecr^*$ of injective TR tensors with rank $\vecr=(r_1,r_2,\dots,r_d)$, i.e.,
\begin{equation}\label{eq:LRTCTR}
	\begin{aligned}
		\min\ &\ \barf(\vectensU)=\frac{1}{2}\left\| \proj_\Omega(\llbracket\tensU_1,\tensU_2,\dots, \tensU_d\rrbracket)-\proj_\Omega(\tensA)\right\|_\mathrm{F}^2\\ 
		\subjectto\ &\ \vectensU=(\tensU_1,\tensU_2,\dots,\tensU_d)\in\bartensM_\vecr^*,
	\end{aligned}
\end{equation}
where $\proj_\Omega$ denotes the projection operator onto $\Omega$ defined by $\proj_\Omega(\tensX)(i_1,\dots,i_d)=\tensX(i_1,\dots,i_d)$ if $(i_1,\dots,i_d)\in\Omega$, otherwise $\proj_\Omega(\tensX)(i_1,\dots,i_d)=0$. Since $\barf(\vectensU)=\barf(\vectensV)$ holds for all $\vectensU,\vectensV\in\bartensM_\vecr^*$ with $\vectensU\sim\vectensV$, the problem~\cref{eq:LRTCTR} can also be formulated on the quotient manifold $\tensM_\vecr^* = \bartensM_\vecr^*/\PGL(\vecr)$. Therefore, we can adopt~\cref{alg: RGD(Q),alg: RCG(Q)} to solve problem~\cref{eq:LRTCTR}. Similarly, the uniform tensor ring completion task can be formulated by
\begin{equation}\label{eq:LRTCuMPS}
	\min_{\tensU\in\bartensM_r^*}\ \ \barf(\tensU)=\frac{1}{2}\left\| \proj_\Omega(\llbracket\tensU,\tensU,\dots, \tensU\rrbracket)-\proj_\Omega(\tensA)\right\|_\mathrm{F}^2, 
\end{equation}
and quotient manifold-based methods (\cref{alg: uTR-RGD(Q),alg: uTR-RCG(Q)}) are also available for solving~\cref{eq:LRTCuMPS}.

All proposed geometric methods in~\cref{alg: RGD(Q)}--\cref{alg: uTR-RCG(Q)} are implemented in Manopt v7.1.0~\cite{boumal2014manopt}, a Matlab library for Riemannian methods. The implementation of tensor ring computation is based on LRTCTR toolbox\footnote{Available at \url{https://github.com/JimmyPeng1998/LRTCTR}.}. All experiments are performed on a workstation with two Intel(R) Xeon(R) Processors Gold 6330 (at 2.00GHz$\times$28, 42M Cache) and 512GB of RAM running Matlab R2019b under Ubuntu 22.04.3. The codes of proposed methods are available at~\url{https://github.com/JimmyPeng1998/LRTCTR}.

\subsection{Test on different geometries}
We compare the numerical performance of RGD and RCG methods on the total and quotient manifolds, i.e., optimization on $\bartensM_\vecr^*$ and $\tensM_\vecr^*$ (or $\bartensM_r^*$ and $\tensM_r^*$), respectively. The RGD and RCG methods for minimization of $\barf$ on the total space $\bartensM_\vecr^*$ (or $\bartensM_r^*$) are denoted by TR-RGD(E) (uTR-RGD(E)) and TR-RCG(E) (uTR-RCG(E)), respectively. We generate third-order synthetic tensor $\tensX^*$ with rank $\vecr = (2, 2, 2)$ and $n_1=n_2=n_3=100$, where elements of the core tensors are i.i.d. samples from the uniform distribution on $[0,1]$. \cref{fig:recovery} reports the recovery performance of TR-based and uniform TR-based methods. We observe that the performance of RCG methods are different, since the vector transport on the quotient manifold is different from the total manifold. The TR-RCG(Q) method performs the best among all candidates in~\cref{fig:recovery} (left). Additionally, RGD methods for both geometries have the same numerical performance, which coincides with the theoretical result. 
\begin{figure}[htbp]
	\centering
	{\includegraphics[width=0.48\textwidth]{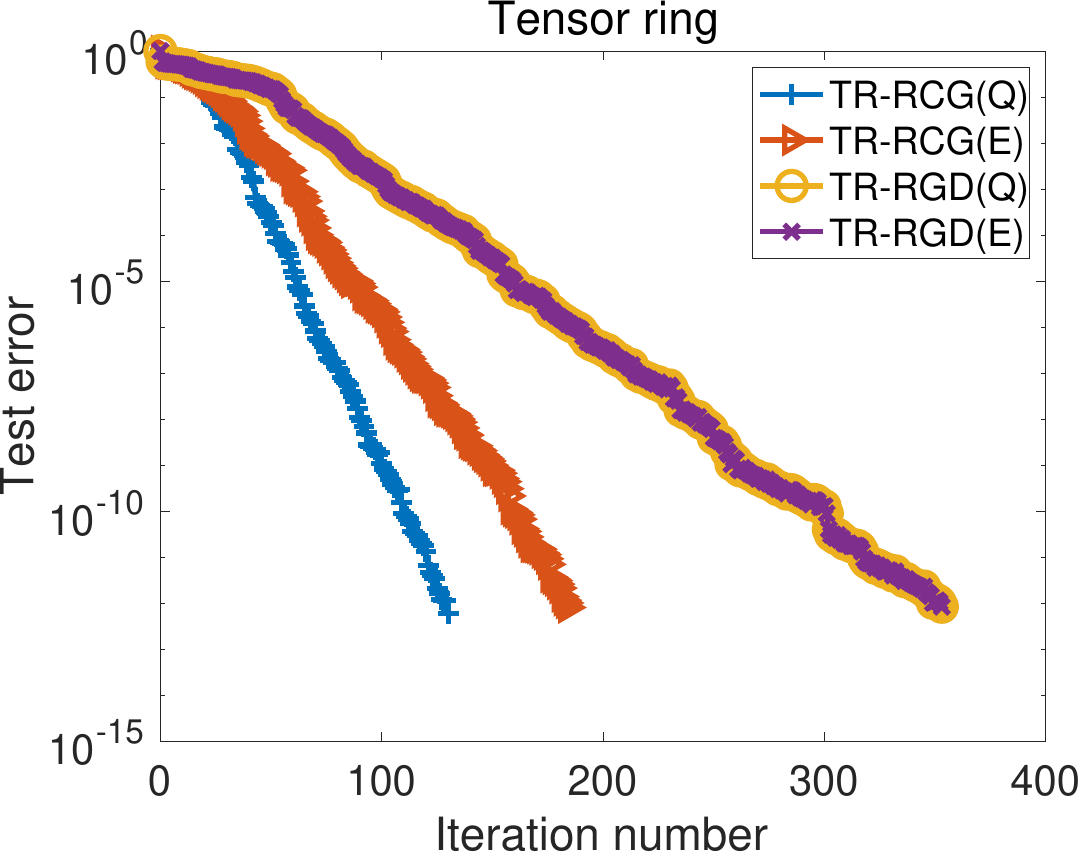}}
	{\includegraphics[width=0.48\textwidth]{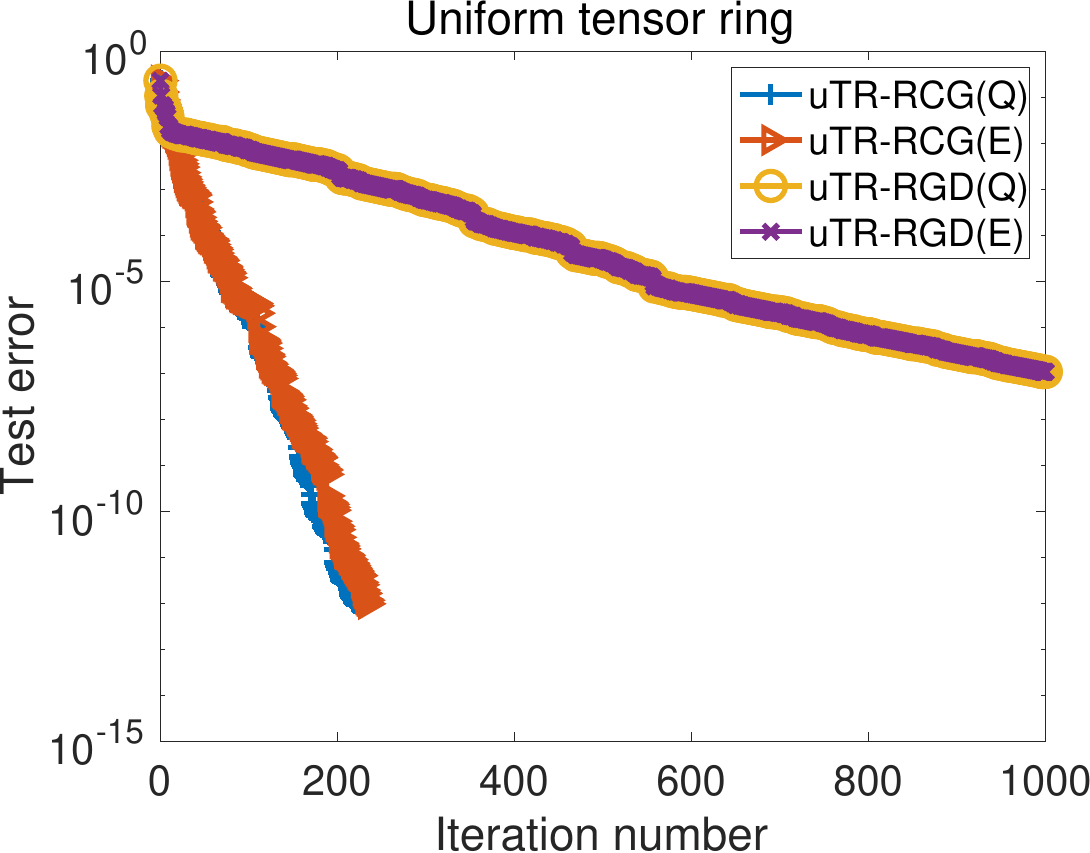}}
	\caption{Recovery performance of TR-based and uniform TR-based methods in tensor ring completion under different geometries.}
	\label{fig:recovery}
\end{figure}

\subsection{Test on sample complexity}
We numerically investigate the relationship between sampling rate and tensor size $n$ for tensor ring completion. We consider two scenarios: 1) for TR decomposition, we randomly generate third order synthetic tensors $\tensX^*$ with rank $\vecr = (2, 2, 2)$, $n_1=n_2=n_3\in\{50,60,\dots,200\}$ and $|\Omega|\in\{2000,4000,\dots,100000\}$; 2) for uniform TR decomposition, we set $r=2$, $n_1=n_2=n_3\in\{50,100,\dots,200\}$ and $|\Omega|\in\{1000,2000,\dots,20000\}$. For each combination of $(n,|\Omega|)$, we run the TR-RCG(Q) and uTR-RCG(Q) for five times. Figure~\ref{fig: phase completion} illustrates the phase transition behavior of TR-RCG(Q) and uTR-RCG(Q) with respect to the tensor size $n$ and sample size $|\Omega|$. For both methods, we observe a transition from unsuccessful (black) to successful (white) recovery as $|\Omega|$ increases. In contrast with the TR-RCG(Q) method, the uTR-RCG(Q) method requires substantially fewer samples for successful recovery since the uniform TR decomposition significantly reduces the number of parameters.
\begin{figure}[htbp]
	\centering
	{\includegraphics[width=0.48\textwidth]{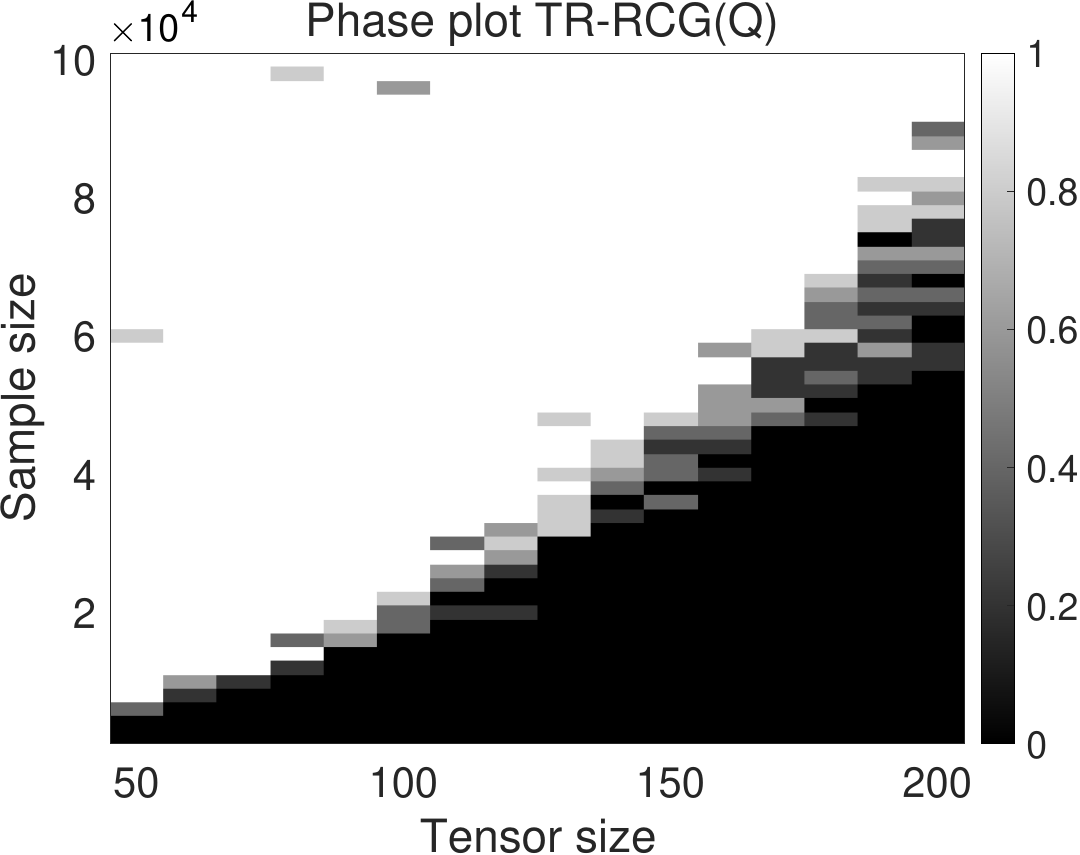}}
	{\includegraphics[width=0.48\textwidth]{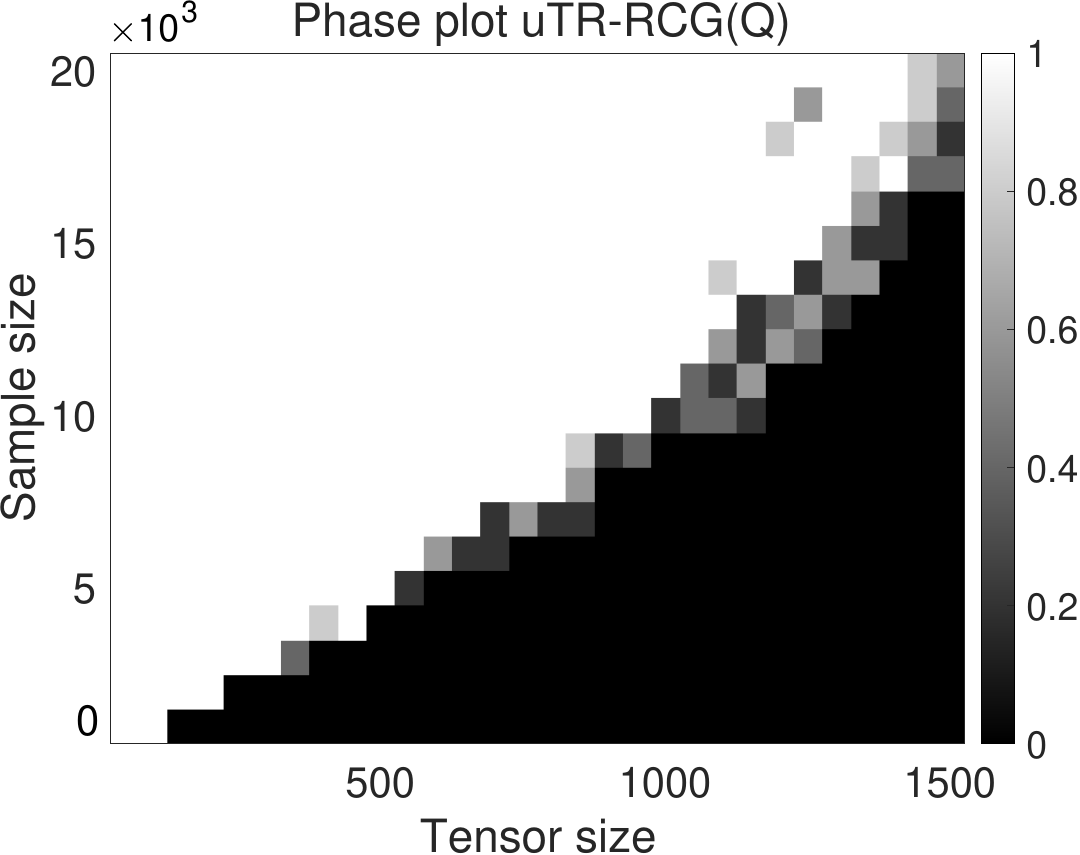}}
	\caption{Phase plots of the TR-RCG(Q) and uTR-RCG(Q) methods of low-rank tensor completion. Left: TR-RCG(Q); right: uTR-RCG(Q).}
	\label{fig: phase completion}
\end{figure}

\section{Conclusion}\label{sec:conclusion}
We have introduced and studied the quotient geometries of the tensor ring decompositions. Unlike the existing results in the geometry of tensors in tensor train format, the geometry of tensor ring decomposition is much more intricate due to the ring structure. To address this challenge, we have imposed full-rank conditions on the parameter space and introduced an appropriate equivalence relation on the new parameter space via gauge invariance. We have proved that the resulting quotient set is a quotient manifold, where the main analytical difficulty arises from the ring structure. Parametrizations of the vertical and horizontal spaces and projections have been provided. We have also extended the results to the uniform TR decomposition. Based on the developed geometries, we have adopted geometric methods for the minimization of a smooth function on the quotient manifolds, and numerically validated the geometric tools in low-rank tensor ring completion.

There are several potential directions for future work. 
\begin{itemize}
    \item Geometrically, an important direction is to rigorously establish whether the image $\tau(\tensM_\vecr^*)$ forms an embedded submanifold of the ambient tensor space. 
    \item In practice, extending the geometric tools to broader applications remains an interesting direction. For instance, since tensor ring decomposition produces excellent approximations to ground states of local Hamiltonians with modest rank, applying the proposed quotient geometry to efficiently compute multiple eigenvectors or eigenspaces remains an open question. 
    \item Moreover, designing problem-adapted Riemannian metrics or preconditioners for optimization on TR manifolds may significantly enhance scalability in applications.
\end{itemize}


 \section*{Acknowledgments}
 The authors would like to thank Haokai Zhang for discussion on matrix product states in physics, and Yingzhou Li for discussion on sample complexity of tensor ring completion.

\bibliographystyle{siamplain}
\bibliography{ref}
\end{document}